\newcommand{\Core}{\operatorname{Core}}
\newtheorem{theorem}{Theorem}[section]
\newtheorem{corollary}[theorem]{Corollary}
\newtheorem{lemma}[theorem]{Lemma}
\newtheorem{proposition}[theorem]{Proposition}
\theoremstyle{definition}
\newtheorem{question}[theorem]{Question}
\newtheorem{remark}[theorem]{Remark}
\numberwithin{equation}{subsection}
\newcommand{\Alex}{\operatorname{Alex}}
\newcommand{\Aut}{\operatorname{Aut}}
\newcommand{\QInn}{\operatorname{QInn}}
\newcommand{\Conj}{\operatorname{Conj}}
\newcommand{\Inn}{\operatorname{Inn}}
\newcommand{\Ha}{\operatorname{H}}
\newcommand{\R}{\operatorname{R}}
\newcommand{\Z}{\operatorname{Z}}
\newcommand{\id}{\mathrm{id}}
\begin{document}
\title{Automorphism groups of quandles and related groups}
\author{Valeriy G. Bardakov}
\author{Timur R. Nasybullov}
\author{Mahender Singh}

\address{Sobolev Institute of Mathematics, 4 Acad. Koptyug avenue, 630090, Novosibirsk, Russia, and Novosibirsk State University, 2 Pirogova Str., 630090, Novosibirsk, Russia.}
\email{bardakov@math.nsc.ru}

\address{Katholieke Universiteit Leuven KULAK, 53  E.~Sabbelaan, 8500, Kortrijk, Belgium.}
\email{timur.nasybullov@kuleuven.be}

\address{Department of mathematical sciences, Indian Institute of Science Education and Research (IISER) Mohali, Sector 81,  S. A. S. Nagar, P. O. Manauli, 140306, Punjab, India.}
\email{mahender@iisermohali.ac.in}

\subjclass[2010]{Primary 20N02; Secondary 20B25, 57M27}
\keywords{Quandle, automorphism of a quandle, braid group, enveloping group, Coxeter group}

\begin{abstract}
In this paper we  study  different questions concerning automorphisms of quandles. For a conjugation quandle $Q={\rm Conj}(G)$ of a group $G$ we determine several subgroups of ${\rm Aut}(Q)$ and find necessary and sufficient conditions when these subgroups coincide with the whole group ${\rm Aut}(Q)$. In particular, we prove that  ${\rm Aut}({\rm Conj}(G))=\Z(G)\rtimes {\rm Aut}(G)$ if and only if either $\Z(G)=1$ or $G$ is one of the groups $\mathbb{Z}_2$, $\mathbb{Z}_2^2$  or $\mathbb{Z}_3$, what solves \cite[Problem 4.8]{BDS}. For a big list of Takasaki quandles $T(G)$ of an abelian group $G$  with $2$-torsion we prove that the group of inner automorphisms ${\rm Inn}(T(G))$ is a Coxeter group, what extends the result \cite[Theorem 4.2]{BDS} which describes $\Inn(T(G))$ and ${\rm Aut}(T(G))$ for an abelian group $G$ without $2$-torsion. We study automorphisms of certain extensions of quandles and determine some interesting subgroups of the automorphism groups of these quandles.
Also we classify finite quandles $Q$ with $3\leq k$-transitive action of ${\rm Aut}(Q)$.
\end{abstract}
\maketitle
\section{Introduction}
 A quandle is an algebraic system whose axioms are derived from the Reidemeister moves on oriented link diagrams. Such algebraic system were firstly introduced by Joyce \cite{Joyce} (under the name  ``quandle'') and Matveev \cite{Matveev} (under the name ``distributive groupoid'') as an invariant for knots in $\mathbb{R}^3$. More precisely,  to each oriented diagram $D_K$ of an oriented knot $K$ in $\mathbb{R}^3$ one can associate the quandle $Q(K)$ which does not change if we apply the Reidemeister moves to the diagram $D_K$.  Moreover, Joyce and Matveev proved that two knot quandles $Q(K_1)$ and $Q(K_2)$ are isomorphic if and only if $K_1$ and $K_2$ are weak equivalent, i.~e. there exists a homeomorphism of $\mathbb{R}^3$ (probably, orientation reversing) which maps $K_1$ to $K_2$.  Over the years, quandles have been investigated by various authors in order to construct new
invariants for knots and links (see, for example, \cite{Carter, Kamada, Nelson}).

The knot quandle is a very strong invariant for knots in $\mathbb{R}^3$, however, usually it is very difficult to understand if two knot quandles are isomorphic or not. Sometimes homomorphisms from knot quandles to some simpler quandles provide useful information which helps to understand if two knot quandles are not isomorphic. For example, Fox's $n$-colourings (which are useful invariants for links in $\mathbb{R}^3$) are representations of the knot quandle into the dihedral quandle $\R_n$ on $n$ elements \cite{Fox,prz}. This leads to the necessity of studying quandles which are not necessarily knot quandles from the algebraic point of view. Quandles turned out to be useful in different branches of algebra, topology and geometry since they have connections to several different topics such
as permutation groups \cite{HulStaVoj}, quasigroups \cite{Sta}, symmetric spaces \cite{Tam}, Hopf
algebras \cite{Andruskiewitsch}.

In this paper we investigate different questions concerning automorphisms of quandles. We also study connections between groups of automorphisms of quandles and groups of automorphisms of some groups closely related to quandles. 
 At this point, our approach is purely algebraic and we make no reference to knots. However, it would be interesting to explore implications of these results in knot
theory.

The paper is organized as follows. In Section \ref{sec2} we recall definitions and notions used in the paper and give several examples of quandles. In Section \ref{sec3} we recall the notion of the enveloping group $G_Q$ of a quandle $Q$ and prove some properties of this group. In particular, we study relations between automorphism groups of $Q$ and $G_Q$ (Proposition \ref{inclus}). In Section \ref{sec4} we study the automorphism group of the conjugation quandle $Q={\rm Conj}(G)$ of a group $G$ and find some groups which are imbedded into ${\rm Aut}(Q)$. In particular, we prove that ${\rm Aut}({\rm Conj}(G))=\Z(G)\rtimes{\rm Aut}(G)$ if and only if either $\Z(G)=1$ or $G$ is one of the groups $\mathbb{Z}_2$, $\mathbb{Z}_2^2$ or $\mathbb{Z}_3$ (Theorem \ref{solution}). In Section \ref{sec5} we study automorphism groups of core quandles (in particular, of Takasaki quandles). In Section \ref{sec6} we classify quandles $Q$ such that the automorphism group ${\rm Aut}(Q)$ acts $3\leq k$-transitively on $Q$ (Theorem \ref{tran}). In Section \ref{sec7} we study automorphisms of certain extensions of quandles and determine some interesting subgroups of the automorphism groups of these quandles
(Theorem \ref{exterrr}). In Section \ref{sec8} we compare the group of inner automorphisms and the group of quasi-inner automorphisms for quandles and give a simple solution of the analogue of the Burnside's question for quandles (Proposition \ref{dihburn}). Finally, in Section \ref{sec9} we introduce some new ways of constructing quandles from groups and quandles (Propositions \ref{new1} and \ref{new2}).
\subsection*{Acknowledgement} The authors are greatful to Jente Bosmans from Vrije Universiteit Brussel for several remarks and suggestions on Section \ref{sec9}. The results given in Sections \ref{sec3}, \ref{sec5}, \ref{sec7},  \ref{sec8}, \ref{sec9} are supported by the Russian Science Foundation Project 16-41-02006 (V.~Bardakov) and the DST-RSF Project INT/RUS/RSF/P-2 (M.~Singh). The results presented in Sections \ref{sec4}, \ref{sec6} are supported by the Research Foundation -- Flanders (FWO), grant  12G0317N (T.~Nasybullov). We are strongly grateful to them.
\bigskip
\section{Definitions, notations and examples}\label{sec2}
\textit{A quandle $Q$} is an algebraic system with one binary operation $(x,y)\mapsto x*y$ which satisfies the following three axioms:
\begin{enumerate}
\item $x*x=x$ for all $x\in Q$,
\item the map $S_x:y\mapsto y*x$ is a bijection on $Q$ for all $x\in Q$,
\item $(x*{y})*z=(x*z)*({y*z})$ for all $x,y,z\in Q$.
\end{enumerate}
The simplest example of a quandle is \textit{the trivial quandle on a set $X$}, that is the quandle $Q=(X,*)$, where $x*y=x$ for all $x\in X$. A lot of interesting examples of quandles come from groups. 

Let $G$ be a group. For elements $x,y\in G$ denote by $x^y=y^{-1}xy$ \textit{the conjugate of $x$ by $y$}. For an arbitrary integer $n$ the set $G$ with the operation $x*y=x^{y^n}=y^{-n}xy^n$ forms a quandle. This quandle is called \textit{the $n$-th conjugation quandle of the group $G$} and is denoted by ${\rm Conj}_n(G)$. For the sake of simplicity we will use the symbol ${\rm Conj}(G)$ to denote the $1$-st conjugation quandle ${\rm Conj}_1(G)$. Note that a quandle ${\rm Conj}_0(G)$ is trivial. 

If we define another operation $*$ on the group $G$, namely $x*y=yx^{-1}y$, then the set $G$ with this operation also forms a quandle. This quandle is called \textit{the core quandle of a group $G$} and is denoted by $\Core(G)$. In particular, if $G$ is an abelian group, then the quandle $\Core(G)$ is called \textit{the Takasaki quandle of the abelian group $G$} and is denoted by $T(G)$. Such quandles were studied by Takasaki in \cite{Takasaki}. If $G=\mathbb{Z}_n$ is the cylcic group of order $n$, then the Takasaki quandle $T(\mathbb{Z}_n)$ is called \textit{the dihedral quandle on $n$ elements} and is denoted by $\R_n$. In the quandle $\R_{n}=\{a_0, a_1, \ldots, a_{n-1}\}$ the operation is given by the rule $a_i * a_j = a_{2j-i({\rm mod}~n)}$.

If $\varphi$ is an automorphism of an abelian group $G$, then the set $G$ with the operation $x*y=\varphi(xy^{-1})y$ forms a quandle. This quandle is called \textit{the Alexander quandle of the abelian group $G$ with
respect to the automorphism $\varphi$} and is denote by $\Alex(G,\varphi)$. The Takasaki quandle $T(G)$ is a particular case of the Alexander quandle $\Alex(G,\varphi)$ for $\varphi:x\mapsto x^{-1}$. Alexander quandles were studied, for example, in \cite{BDS, Clark1, Clark}.

The subset $\Z(Q)=\{x\in Q~|~x*y=x~\text{for all}~y\in Q\}$ of a quandle $Q$ is called \textit{the center of the quandle $Q$}. In particular, if $Q={\rm Conj}(G)$, then the center $\Z(Q)$ of the quandle $Q$ coincides with the center $\Z(G)$ of the group $G$. For $n\neq\pm1$ the sets $\Z({\rm Conj}_n(G))$ and $\Z(G)$ do not have to coincide.

A bijection $\varphi:Q\to Q$ is called \textit{an automorphism of  a quandle $Q$} if for every elements $x,y\in Q$ the equality $\varphi(x*y)=\varphi(x)*\varphi(y)$ holds. In particular, if $Q$ is the trivial quandle, then the group ${\rm Aut}(Q)$ of automorphisms of the quandle $Q$ consists of all permutations of the elements from $Q$, i.~e. ${\rm Aut}(Q)=\Sigma_{|Q|}$ is  the full permutation group of $|Q|$ elements	.
 From the second and the third axioms of a quandle it follows that the map $S_x:y\mapsto y*x$ is an automorphism of a quandle. The subgroup $\langle S_x~|~x\in Q\rangle$ of the group ${\rm Aut}(Q)$ is called \textit{the group of inner automorphisms of the quandle $Q$} and is denoted by $\Inn(Q)$. Every automorphism from $\Inn(Q)$ is called \textit{an inner automorphism of the quandle $Q$}. 

For the element $x\in Q$ the subset $\{\varphi(x)~|~\varphi\in \Inn(Q)\}$ of $Q$ is called \textit{the orbit of the element $x$} and is denoted by $Orb(x)$. The set of orbits of elements of a quandle $Q$ is denoted by $Orb(Q)$. A quandle $Q$ is called \textit{connected} if the set $Orb(Q)$ has  only one element, otherwise $Q$ is called \textit{disconnected}. The dihedral quandle $\R_n$ is connected if an only if $n$ is odd. The $n$-th conjugation quandle ${\rm Conj}_n(G)$ of a nontrivial group $G$ is always disconnected.\bigskip
\section{Enveloping groups of quandles}\label{sec3}
For a quandle $Q$ denote by $G_Q$ the group with the set of generators $Q$ and the set of relations $x*y=y^{-1} x y$ fo all $x,y\in Q$. The group $G_Q$ is called \textit{the enveloping group of the quandle $Q$}. For example, if $Q$ is a trivial quandle then the group $G_Q$ is the free abelian group of rank $|Q|$. The enveloping group $G_{\R_3}$ of the dihedral quandle $\R_3$ on $3$ elements is more interesting. It has $3$ generators $x_0, x_1, x_2$ and is defined by the following relations
$$x_0 x_1 x_0^{-1}=x_2=x_1 x_0 x_1^{-1},~x_1 x_2 x_1^{-1} =x_0=x_2 x_1 x_2^{-1},~x_0 x_2 x_0^{-1} =x_1=x_2 x_0 x_2^{-1}.
$$
Using the first relation we remove the element $x_2$ from the list of generators and obtain the following relations
$$x_0 x_1 x_0^{-1}= x_1 x_0 x_1^{-1},~x_0 x_1 x_0^{-1}= x_1^{-1}x_0x_1,~x_1 x_0 x_1^{-1} = x_0^{-1}x_1x_0.$$
The first and the second relations together imply the equality $x_1^2x_0x_1^{-2}=x_0$, i.~e. $x_1^2$ belongs to the center of $G_{\R_3}$. Analogically, the first and the third relation imply that $x_0^2$ belongs to the center of $G_{\R_3}$. Therefore looking at the square of the first relation we have $x_0^2=x_1^2$, and then all the relation are equivalent to the relation $x_0x_1x_0=x_1x_0x_1$. So, we proved the following.
\begin{proposition}\label{dihh}The enveloping group 
$G_{\R_3}$ of the dihedral quandle $\R_3$ has two generators $x_0, x_1$ and two relations $x_1 x_0 x_1 = x_0 x_1 x_0$, $x_0^2 = x_1^2$. In particular, $G_{\R_3}$ is a homomorphic image of the braid group $B_3 = \langle \sigma_1, \sigma_2~|~\sigma_1 \sigma_2 \sigma_1 = \sigma_2 \sigma_1 \sigma_2 \rangle$  on $3$ strands.
\end{proposition}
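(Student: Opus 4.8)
The plan is to run an explicit sequence of Tietze transformations on the defining presentation of $G_{\R_3}$, following the computation outlined just before the statement. First I would write $\R_3=\{a_0,a_1,a_2\}$ and, using the rule $a_i*a_j=a_{2j-i\,({\rm mod}\,3)}$, read off the defining relations $x_i*x_j = x_j^{-1}x_ix_j$ of $G_{\R_3}$ for $i\neq j$ (the relations with $i=j$ are trivial). This produces the presentation on the three generators $x_0,x_1,x_2$ with the six relations displayed above.

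Next I would eliminate $x_2$: one of these relations expresses $x_2$ as a word in $x_0,x_1$, so by a Tietze transformation we drop $x_2$ from the generating set and substitute that word into the remaining relations, arriving after simplification at the three relations $R_1,R_2,R_3$ on $x_0,x_1$ displayed above. The key algebraic observation comes now: combining $R_1$ with $R_2$ gives $x_1^2x_0=x_0x_1^2$, and combining $R_1$ with $R_3$ gives $x_0^2x_1=x_1x_0^2$, so both $x_0^2$ and $x_1^2$ are central in $G_{\R_3}$. Squaring $R_1$ and using this centrality collapses $R_1$ to $x_0^2=x_1^2$, after which each of $R_1,R_2,R_3$ reduces to the braid relation $x_0x_1x_0=x_1x_0x_1$. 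Hence $G_{\R_3}=\langle x_0,x_1 \mid x_0x_1x_0=x_1x_0x_1,\ x_0^2=x_1^2\rangle$.

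For the converse direction — that these two relations genuinely entail all the original ones — I would observe that as soon as $x_0^2=x_1^2$ is imposed, both $x_0^2$ and $x_1^2$ are automatically central (each commutes with $x_0$ and with $x_1$), so the passage back through $R_1,R_2,R_3$, and then back to the six original relations with $x_2$ reintroduced as the appropriate word in $x_0,x_1$, is reversible at every stage. Finally, the ``in particular'' clause is immediate: since $x_0,x_1$ satisfy the braid relation, the assignment $\sigma_1\mapsto x_0$, $\sigma_2\mapsto x_1$ extends to a surjective homomorphism $B_3\to G_{\R_3}$.

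The step demanding the most care is the bookkeeping in the elimination of $x_2$ and, above all, verifying that the reduction from $\{R_1,R_2,R_3\}$ to $\{x_0x_1x_0=x_1x_0x_1,\ x_0^2=x_1^2\}$ is a genuine equivalence rather than a one-way implication, i.e.\ that no relation is lost when passing to the shorter presentation. There is no conceptual obstacle — only the routine check that each of the original relations is recovered from the two new ones.
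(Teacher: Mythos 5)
Your proof follows essentially the same route as the paper's: write down the defining relations of $G_{\R_3}$ from the quandle multiplication, eliminate $x_2$ by a Tietze transformation, deduce that $x_0^2$ and $x_1^2$ are central and hence equal by squaring the first relation, and collapse the remaining relations to the braid relation $x_0x_1x_0=x_1x_0x_1$. The only difference is that you explicitly check the reverse implication (that the two new relations recover all the original ones), a point the paper passes over with ``all the relations are equivalent''; this is a worthwhile extra verification but not a different argument.
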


The subgroup $P_3=\langle \sigma_1^2, \sigma_2^2, \sigma_2 \sigma_1^2 \sigma_2^{-1}\rangle$ of the braid group $B_3$ is called \textit{the pure braid group on $3$ strands}. The group $P_3$ is normal in $B_3$ and we have the following short exact sequence of groups
$$
1 \to P_3 \to B_3 \to \Sigma_3 \to 1.
$$
Defining the homomorphism $B_3 \to G_{\R_3}$ by the rule $\sigma_1 \mapsto x_0$, $\sigma_2 \mapsto x_1$, we have the following short exact sequence of groups
$$
1 \to \mathbb{Z} \to G_{\R_3} \to \Sigma_3 \to 1,
$$
where $\mathbb{Z} = \langle x_0^2\rangle$ is the center of $G_{\R_3}$. This extension does not split. In \cite[Lemma 2.3]{GarVen} it is proved that the enveloping group $G_Q$ of a connected quandle $Q$ can be decomposed $G_Q=[G_Q,G_Q]\rtimes \mathbb{Z}$. This result emplies the following proposition which gives a split decomposition of the group $G_{\R_3}$.
\begin{proposition}The enveloping group
$G_{\R_3}$ is the semi-direct product $G_{\R_3} = \mathbb{Z}_3 \rtimes \mathbb{Z}$.
\end{proposition}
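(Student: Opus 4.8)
The plan is to combine the two structural facts already established in this section. Since $3$ is odd, the dihedral quandle $\R_3$ is connected, so by \cite[Lemma 2.3]{GarVen} we already know that $G_{\R_3}=[G_{\R_3},G_{\R_3}]\rtimes\mathbb{Z}$. Thus the whole proof reduces to identifying the commutator subgroup $N:=[G_{\R_3},G_{\R_3}]$ with $\mathbb{Z}_3$.

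First I would compute the abelianization from the presentation of Proposition \ref{dihh}: abelianizing the relation $x_1x_0x_1=x_0x_1x_0$ forces $x_0\equiv x_1$, after which $x_0^2=x_1^2$ becomes vacuous, so $G_{\R_3}/[G_{\R_3},G_{\R_3}]\cong\mathbb{Z}$, generated by the common image of $x_0$ and $x_1$. In particular the image of $x_0^2$ in this abelianization has infinite order, hence $\langle x_0^2\rangle\cap N=1$.

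Next, feed this into the exact sequence $1\to\langle x_0^2\rangle\to G_{\R_3}\xrightarrow{\ \pi\ }\Sigma_3\to1$ displayed above, where $\langle x_0^2\rangle=\Z(G_{\R_3})$. A surjection carries commutator subgroup onto commutator subgroup, so $\pi(N)=[\Sigma_3,\Sigma_3]=\A_3\cong\mathbb{Z}_3$, the alternating subgroup; and $\ker\pi\cap N=\langle x_0^2\rangle\cap N=1$ by the previous paragraph. Therefore $\pi$ restricts to an isomorphism $N\xrightarrow{\ \sim\ }\A_3$, so $N\cong\mathbb{Z}_3$, and $G_{\R_3}=\mathbb{Z}_3\rtimes\mathbb{Z}$. (The generator of the $\mathbb{Z}$-factor acts on $\mathbb{Z}_3$ by inversion: were its action trivial, $G_{\R_3}$ would be abelian, contradicting $G_{\R_3}/\Z(G_{\R_3})\cong\Sigma_3$.) The only labor here is the bookkeeping of this last step, and no part of it presents a genuine obstacle.

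As an alternative that bypasses \cite[Lemma 2.3]{GarVen} altogether, one can argue directly by Tietze transformations on the presentation of Proposition \ref{dihh}: setting $a=x_0x_1^{-1}$ and $t=x_1$ (so $x_0=at$, $x_1=t$), the relation $x_0^2=x_1^2$ rewrites as $tat^{-1}=a^{-1}$, and then a short manipulation puts the braid relation $x_1x_0x_1=x_0x_1x_0$ in the form $t^3a=t^3a^{-2}$, i.e. $a^3=1$. This exhibits $G_{\R_3}\cong\langle a,t\mid a^3=1,\ tat^{-1}=a^{-1}\rangle=\mathbb{Z}_3\rtimes\mathbb{Z}$ explicitly; the only care needed is that small commutator computation turning the braid relation into $a^3=1$.
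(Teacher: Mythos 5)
Your proof is correct. Your main line of argument is the same as the paper's: the paper invokes \cite[Lemma 2.3]{GarVen} to get $G_{\R_3}=[G_{\R_3},G_{\R_3}]\rtimes\mathbb{Z}$ and then simply asserts that ``direct calculations'' show $[G_{\R_3},G_{\R_3}]\cong\mathbb{Z}_3$. What you add is a clean replacement for that unspecified calculation: since $G_{\R_3}^{ab}\cong\mathbb{Z}$ with $x_0$ mapping to a generator, the central subgroup $\langle x_0^2\rangle$ meets the derived subgroup trivially, so the projection in the sequence $1\to\langle x_0^2\rangle\to G_{\R_3}\to\Sigma_3\to1$ restricts to an isomorphism of $[G_{\R_3},G_{\R_3}]$ onto $[\Sigma_3,\Sigma_3]\cong\mathbb{Z}_3$. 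This is a genuine improvement in rigor over the paper's one-line proof, and both ingredients (the exact sequence and the abelianization, via Proposition \ref{fingen} or Proposition \ref{dihh}) are already established in the section. Your second, Tietze-transformation argument is a genuinely different route: the substitution $a=x_0x_1^{-1}$, $t=x_1$ turns $x_0^2=x_1^2$ into $tat^{-1}=a^{-1}$ and the braid relation into $a^3=1$ (I checked: $tat^2=t^3a$ and $at^2at=t^3a^{-2}$ using $t^{\pm1}at^{\mp1}=a^{-1}$), giving the presentation $\langle a,t\mid a^3=1,\ tat^{-1}=a^{-1}\rangle$ directly. This bypasses \cite[Lemma 2.3]{GarVen} entirely and, as a bonus, makes the action of the $\mathbb{Z}$-factor on $\mathbb{Z}_3$ explicit, at the cost of being an ad hoc computation rather than an instance of a general splitting result for connected quandles.
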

\begin{proof} Using Proposition \ref{dihh} and direct calculations it is easy to check that $[G_{\R_3},G_{\R_3}]=\mathbb{Z}_3$.
\end{proof}

In the general case we have the following proposition about enveloping groups.
\begin{proposition}\label{fingen} Let $Q$ be a quandle.
\begin{enumerate}
\item If $Q$ has finitely many orbits, then the abelianization of the enveloping group $G_{Q}$ is isomorphic to $\mathbb{Z}^{\left|Orb(Q)\right|}$.
\item If $Q$ is finite, then the derived subgroup of the enveloping group $G_Q$  is finitely generated.
\end{enumerate}
\end{proposition}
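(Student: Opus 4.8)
The plan is to handle the two parts separately, each reducing to the presentation of $G_Q$ together with the observation that $Q$ is a conjugation-invariant generating set of $G_Q$.

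\emph{Part (1).} I would compute $G_Q^{\mathrm{ab}} = G_Q/[G_Q,G_Q]$ directly from the presentation. Abelianizing, the defining relation $x*y = y^{-1}xy$ collapses to $x*y = x$; that is, writing $S_y$ for the map $z \mapsto z*y$, the images of $x$ and $S_y(x)$ coincide in $G_Q^{\mathrm{ab}}$ for all $x,y \in Q$. Hence $G_Q^{\mathrm{ab}}$ is the free abelian group on the set $Q$ modulo the identifications $x \sim S_y(x)$. Since the permutations $S_y$ ($y \in Q$) generate $\Inn(Q)$ and the relation $x\sim S_y(x)$ simultaneously yields $z \sim S_y^{-1}(z)$, the equivalence relation generated by these identifications is precisely the partition of $Q$ into $\Inn(Q)$-orbits, i.e. into the set $Orb(Q)$. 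It follows that $G_Q^{\mathrm{ab}}$ is free abelian on $Orb(Q)$ --- the absence of further relations among the orbit classes is checked by mapping onto the free abelian group on $Orb(Q)$ --- and when $Q$ has finitely many orbits this is $\mathbb{Z}^{|Orb(Q)|}$.

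\emph{Part (2).} The key point is that conjugation in $G_Q$ preserves the generating set $Q$: for generators $x,y$ one has $y^{-1}xy = x*y \in Q$, and likewise $yxy^{-1} = S_y^{-1}(x) \in Q$ because $S_y$ is a bijection of $Q$. Since $Q$ generates $G_Q$, conjugation by any $g\in G_Q$ maps $Q$ bijectively onto $Q$, giving a homomorphism $\rho\colon G_Q \to \Sym(Q)$, $\rho(g)\colon x \mapsto gxg^{-1}$. Its image lies in the finite group $\Sym(Q)$ (and in fact equals $\Inn(Q)$), while its kernel is $\{g \in G_Q \mid gx = xg \text{ for all } x \in Q\} = \Z(G_Q)$, again because $Q$ generates $G_Q$. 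Thus $G_Q/\Z(G_Q)$ is finite whenever $Q$ is finite, and Schur's theorem --- a group in which the center has finite index has finite commutator subgroup --- shows that $[G_Q,G_Q]$ is finite, hence finitely generated.

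The only genuine obstacle is recognizing in part (2) that $Q$ is a conjugation-stable generating set; after that the result is immediate from Schur's theorem, which incidentally yields the stronger conclusion that $[G_Q,G_Q]$ is \emph{finite}, not merely finitely generated, whenever $Q$ is finite. Part (1) is a routine presentation computation.
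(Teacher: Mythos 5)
Your proof is correct. Part (1) is essentially the paper's own argument: abelianize the presentation, observe that the relations collapse to identifying generators in the same $\Inn(Q)$-orbit, and conclude that $G_Q^{\mathrm{ab}}$ is free abelian on $Orb(Q)$. Part (2), however, takes a genuinely different route. The paper stays inside the presentation: it notes that each commutator of generators $[a_i,a_j]$ equals an element of the form $a_{s(i,j)}^{-1}a_j$, and that these finitely many elements generate the derived subgroup (implicitly using that this finite set is stable under conjugation by generators, so the subgroup it generates is normal and has abelian quotient). You instead use the conjugation action of $G_Q$ on its generating set: since the image $\bar{Q}$ of $Q$ in $G_Q$ is finite, closed under conjugation, and generates $G_Q$, the centralizer of $\bar{Q}$ is $\Z(G_Q)$ and has finite index, so Schur's theorem applies. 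This buys you the strictly stronger conclusion that $[G_Q,G_Q]$ is \emph{finite} (consistent with the paper's computation $[G_{\R_3},G_{\R_3}]\cong\mathbb{Z}_3$), at the cost of invoking Schur's theorem rather than reading a generating set off the presentation; the paper's argument is more elementary and self-contained but proves less. One small point of hygiene: since the natural map $Q\to G_Q$ need not be injective (the paper gives Joyce's three-element example), your permutation representation should be on the image $\bar{Q}$ rather than on $Q$ itself; as $|\bar{Q}|\leq|Q|<\infty$ this does not affect the argument.
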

\begin{proof}
(1) Let $Q = \{ x_i~|~i\in I\}$, then $G_Q$ is generated by elements $\{ a_i~|~i\in I \}$ and is defined by the relations $a_i^{-1} a_j a_i = a_{s(i,j)}$,
where the index $s(i,j)$ is uniquely determined by the multiplication in $Q$
$$x_i * x_j = x_{s(i,j)}.$$ 
Modulo $G_Q^{\prime}$ the relation $a_i^{-1} a_j a_i = a_{s(i,j)}$ has the form $a_j= a_{s(i,j)}$ and we have an isomorphism between $G_Q^{ab}$ and $\mathbb{Z}^{|Orb(Q)|}$.

(2) The commutator subgroup $G_Q^{\prime}$ is generated by commutators $a_i^{-1} a_j^{-1} a_i a_j = a_{s(i,j)}^{-1} a_j$ for all $i,j$. The number of these commutators is finite.
\end{proof}
The dihedral quandle $\R_3$ on $3$ elements is connected, i.~e. it has only $1$ orbit. Therefore by Proposition \ref{fingen} the abelianization of the enveloping group $G_{\R_3}$ is isomorphic to $\mathbb{Z}$. The same result follows from Proposition \ref{dihh} since the enveloping group of the dihedral quandle $\R_3$  has the presentation $G_{\R_3}=\langle x_0, x_1 ~|~x_1 x_0 x_1 = x_0 x_1 x_0, x_0^2 = x_1^2\rangle$.

The natural map $Q\to G_Q$ which maps an element from $Q$ to the corresponding generator of $G_Q$ induces a homomorphism of quandles $Q\to {\rm Conj}(G_Q)$. This homomorphism is not necessarily injective. For example, in \cite[Section 6]{Joyce} Joyce noticed that if $Q$ is a quandle with $3$ elements $x,y,z$ and operation $x*y=z$, $x*z=x$, $z*x=z$, $z*y=x$, $y*x=y$, $y*z=y$, then the map $Q\to G_Q$ maps $x$ and $z$ to the same element and therefore it is not injective.

The following result shows a relation between the automorphism group of $Q$ and the automorphism group of $G_Q$ in the case when the natural map $Q\to G_Q$ is injective.
\begin{proposition}\label{inclus} Let $Q$ be a quandle such that the natural map $Q\to G_Q$ which maps an element from $Q$ to the correspoding generator of $G_Q$ is injective. If $\Aut_Q(G_Q)$ denotes the subgroup of $\Aut(G_Q)$ consisting of automorphisms which keep $Q$ invariant, then the natural map $\phi:\Aut_Q(G_Q) \to \Aut(Q)$ given by $\phi(f) = f|_Q$ is an isomorphism of groups.
\end{proposition}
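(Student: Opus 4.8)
The plan is to realize $\phi$ through the universal property of the enveloping group. First I would record that $\phi$ is well defined. Write $i\colon Q\to G_Q$ for the natural map, which is injective by hypothesis. The defining relations of $G_Q$ say precisely that $i(x*y)=i(y)^{-1}i(x)i(y)$ for all $x,y\in Q$, so $i(Q)$ is a subquandle of $\Conj(G_Q)$ and $i$ is an isomorphism of $Q$ onto it. For $f\in\Aut_Q(G_Q)$ we have $f\big(i(Q)\big)=i(Q)$ and $f$ is an injective group (hence $\Conj$-) homomorphism, so $f|_{i(Q)}$ is a quandle automorphism of $i(Q)$; transporting along $i$ gives $\phi(f)=f|_Q\in\Aut(Q)$. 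It is immediate that $\phi$ is a group homomorphism, and it is injective: since $Q$ generates $G_Q$, if $f|_Q=\id_Q$ then $f$ fixes every generator, so $f=\id_{G_Q}$.

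The substance is surjectivity, for which I would invoke the universal property of $G_Q$: a group homomorphism $G_Q\to H$ is the same as an assignment of the generators $Q$ satisfying the relations $x*y=y^{-1}xy$, that is, a quandle homomorphism $Q\to\Conj(H)$, so $\Hom_{\mathrm{Grp}}(G_Q,H)\cong\Hom_{\mathrm{Qnd}}(Q,\Conj(H))$ naturally in $H$. Given $\psi\in\Aut(Q)$, apply this with $H=G_Q$ to the quandle homomorphism $i\circ\psi\colon Q\to\Conj(G_Q)$ to get a group endomorphism $\widehat{\psi}\colon G_Q\to G_Q$ with $\widehat{\psi}\circ i=i\circ\psi$. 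Doing the same for $\psi^{-1}$ yields $\widehat{\psi^{-1}}$, and the compositions $\widehat{\psi}\circ\widehat{\psi^{-1}}$ and $\widehat{\psi^{-1}}\circ\widehat{\psi}$ both restrict to $\id_Q$ on generators, hence equal $\id_{G_Q}$ by the uniqueness half of the universal property. Thus $\widehat{\psi}\in\Aut(G_Q)$; moreover $\widehat{\psi}\big(i(Q)\big)=i\big(\psi(Q)\big)=i(Q)$, so $\widehat{\psi}\in\Aut_Q(G_Q)$, and $\phi(\widehat{\psi})=\widehat{\psi}|_Q=\psi$ after identifying $Q$ with $i(Q)$. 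Hence $\phi$ is an isomorphism.

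I do not expect a genuine obstacle here; the argument is a formal consequence of the adjunction between $Q\mapsto G_Q$ and $H\mapsto\Conj(H)$. The only points needing care are (a) the identification of $Q$ with its image $i(Q)\subseteq G_Q$ — this is exactly where injectivity of $i$ is used, both to make the restriction ``$f|_Q$'' meaningful and to read off $\widehat{\psi}|_Q=\psi$ — and (b) being precise that $\Aut_Q(G_Q)$ is the stabilizer of the subset $i(Q)$ in $\Aut(G_Q)$, so that restriction genuinely lands in the bijections of $Q$ and, via the $\widehat{\psi}$ construction, surjects onto $\Aut(Q)$.
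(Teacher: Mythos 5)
Your proof is correct and follows essentially the same route as the paper: injectivity of $\phi$ from the fact that $Q$ generates $G_Q$, and surjectivity by extending $\psi\in\Aut(Q)$ to an endomorphism of $G_Q$ via the presentation (the paper does this explicitly through the free group $F(Q)$ and a check that the relations $x*y=y^{-1}xy$ are preserved, which is exactly the universal property you invoke), with the extension of $\psi^{-1}$ serving as the inverse. Your added care about identifying $Q$ with its image $i(Q)$ is a reasonable tightening of a point the paper leaves implicit, but it is not a different argument.
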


\begin{proof}
The map $ f|_Q:Q \to Q$ is obviously a bijection of $Q$ for each $f \in \Aut_Q(G_Q)$. Moreover, if $x, y \in Q$, then $f|_Q(x*y)=f|_Q(y^{-1}xy)= f(y)^{-1}f(x)f(y)=f|_Q(x)*f|_Q(y)$, i.~e. $\phi$ is indeed the map from $\Aut_Q(G_Q)$ to $\Aut(Q)$, which is obviously a homomorphism. So, we need to prove that $f$ is a bijection.

If $\phi(f)=\phi(g)$ for $f,g\in \Aut(G_Q)$, then $f(x)=g(x)$ for all $x\in Q$ and then $f=g$ since $G_Q$ is generated by the set $Q$. So, $\phi$ is an injective map. For the surjectivity, let $\alpha \in \Aut(Q)$. Then the composition of $\alpha$ with the natural inclusion  $i:Q \hookrightarrow G_Q$ can be extended to a group homomorphism $\alpha':F(Q) \to G_Q$, where $F(Q)$ is the free group on $Q$. For the elements $x, y \in Q$ we have
$$
\alpha'(x*y) =  i \alpha (x*y)
= i \big(\alpha(x)* \alpha(y) \big)
= \alpha(y)^{-1}\alpha(x) \alpha(y)
= \alpha'(y)^{-1}\alpha'(x) \alpha'(y)
= \alpha'(y^{-1} x y),
$$
i.~e. $\alpha'$ induces a group homomorphism $\tilde{\alpha}: G_Q \to G_Q$. It is easy to see that $\tilde{\alpha}$ is an automorphism of $G_Q$ which keeps $Q$ ivariant with inverse $\widetilde{\alpha^{-1}}$ and $\phi(\tilde{\alpha})= \alpha$.
\end{proof}

Unfortunately, it is quite difficult to check if the natural map $Q\to G_Q$ is injective or not. However, if it is, then by Proposition \ref{inclus} the group ${\rm Aut}(Q)$ is a subgroup of the group $\Aut(G_Q)$. This subgroup is not necessarily normal in $\Aut(G_Q)$. Indeed, if $Q=\{x_1,x_2,\dots,x_n\}$ is the trivial quandle, then the natural map $Q\to G_Q$ is injective and the enveloping group $G_Q$ is the free abelian group $\mathbb{Z}^{n}$ of rank $n$.
 The group ${\rm Aut}(Q)$ is isomorphic to $\Sigma_{n}$ and consists of all possible permutations of the elements from $Q$. The group $\Aut(G_Q)$ is the general linear group $\mathrm{GL}_{n} (\mathbb{Z})$ which acts naturally on $G_Q=\mathbb{Z}^{n}$. 
If $Q$ contains  at least $2$ elements, then denote by
$$
\varphi :\begin{cases}x_1\mapsto x_2,&\\
x_2\mapsto x_1,&\\
x_i\mapsto x_i,&i>2.
\end{cases}\psi: \begin{cases}
x_1\mapsto x_1+x_2,&\\
x_i\mapsto x_i,&i>1.
\end{cases}
$$
The map $\varphi$ belongs to the group $\Aut(Q)=\Sigma_n\leq{\rm GL}_{n}(\mathbb{Z})$ and the map $\psi$ belongs to the group $\Aut(G_Q)={\rm GL}_{n}(\mathbb{Z})$. Since the automorphism $\psi^{-1}\varphi\psi$ of $G_Q$ has the form
$$
\psi^{-1}\varphi\psi : \begin{cases}
x_1 \mapsto x_1,& \\
x_2 \mapsto x_1 - x_2,& \\
x_i \mapsto x_i,& i > 2,
\end{cases}
$$
it does not belong to $\Aut(Q)$, therefore $\Aut(Q)$ is not a normal subgroup of $\Aut(G_Q)$.\bigskip

\section{Automorphisms of conjugation quandles}\label{sec4}
Let $G$ be a group and $\varphi$ be an automorphism of $G$. Since $\varphi\left(x^{y^n}\right)=\varphi(x)^{\varphi(y)^n}$, every automorphism of $G$ induces the automorphism of ${\rm Conj}_n(G)$ and therefore $\Aut(G) \leq \Aut\left({\rm Conj}_n (G)\right)$. Of course, the group $\Aut(G)$ is not necessarily normal in  $\Aut\left({\rm Conj}_n (G)\right)$. For example, if $G$ is a cyclic group of prime order $p\geq5$, then the order of ${\rm Aut}(G)$ is equal to $p-1$. The quandle ${\rm Conj}(G)$ is trivial and $\Aut\left({\rm Conj}(G)\right)=\Sigma_p$. The only proper normal subgroup of $\Sigma_p$ for $p\geq5$ is the alternating group $A_p$ which has order $p!/2>p-1$. Therefore $\Aut(G)$ cannot be normal in $\Aut\left({\rm Conj}(G)\right)$.

We will show
that $\Aut(G)=\Aut\left({\rm Conj}(G) \right)$ if and only if $\Z(G)=1$.

\begin{lemma}
Let $G$ be a group, $Q={\rm Conj}(G)$ and $\varphi$ be an automorphism of $Q$. Then for all $x,y\in Q$ there exists an element $z\in \Z(G)$ such that $\varphi(xy)=\varphi(x)\varphi(y)z$.
\end{lemma}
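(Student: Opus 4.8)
The plan is to exploit the fact that the operation in $Q={\rm Conj}(G)$ is conjugation, so that a quandle automorphism of $Q$ is forced to be compatible with the inner automorphisms of $G$; since two elements of $G$ induce the same inner automorphism precisely when they differ by a central element, this will pin down $\varphi(xy)$ up to an element of $\Z(G)$.

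First I would record the defining property of $\varphi$: since $a*b=b^{-1}ab$ in $Q$, we have $\varphi(b^{-1}ab)=\varphi(b)^{-1}\varphi(a)\varphi(b)$ for all $a,b\in G$. Now fix $x,y\in Q$ and take an arbitrary $g\in G$. Writing $(xy)^{-1}g(xy)=y^{-1}(x^{-1}gx)y$ and applying $\varphi$ — first to the outer conjugation by $y$, then to the inner conjugation by $x$ — gives
$$\varphi(xy)^{-1}\,\varphi(g)\,\varphi(xy)=\varphi(y)^{-1}\varphi(x)^{-1}\,\varphi(g)\,\varphi(x)\varphi(y)=\big(\varphi(x)\varphi(y)\big)^{-1}\varphi(g)\big(\varphi(x)\varphi(y)\big).$$
Because $\varphi$ is a bijection of the underlying set $G$, the element $\varphi(g)$ ranges over all of $G$ as $g$ does; hence $z:=\varphi(xy)\big(\varphi(x)\varphi(y)\big)^{-1}$ commutes with every element of $G$, i.e. $z\in\Z(G)$. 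Since $z$ is central, $\varphi(xy)=z\,\varphi(x)\varphi(y)=\varphi(x)\varphi(y)\,z$, which is exactly the assertion.

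An alternative, more structural phrasing of the same computation — which may be convenient for later parts of the paper — uses the inner automorphisms $S_a\colon g\mapsto g*a=a^{-1}ga$ of $Q$ (quandle automorphisms by axioms (2)–(3), as recalled above). One checks $S_aS_b=S_{ba}$, so $S_{xy}=S_yS_x$, and that $\varphi S_a\varphi^{-1}=S_{\varphi(a)}$, a direct consequence of $\varphi$ being a quandle automorphism. Then $S_{\varphi(xy)}=\varphi S_{xy}\varphi^{-1}=\varphi S_yS_x\varphi^{-1}=S_{\varphi(y)}S_{\varphi(x)}=S_{\varphi(x)\varphi(y)}$, and $S_c=S_d$ forces $cd^{-1}\in\Z(G)$, giving the same conclusion. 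I do not anticipate a genuine obstacle here: the one point that needs care is the use of \emph{surjectivity} of $\varphi$ on $G$ — without it one would only conclude that $z$ centralizes the subset $\varphi(G)$ rather than all of $G$ — while the rest is a two-line manipulation of the conjugation identity.
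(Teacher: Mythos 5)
Your proof is correct and is essentially the paper's own argument: both compute the image of a conjugate $g^{xy}=(g^x)^y$ in two ways using the automorphism property, then use surjectivity of $\varphi$ to conclude that $\varphi(xy)\bigl(\varphi(x)\varphi(y)\bigr)^{-1}$ centralizes all of $G$. The only cosmetic difference is your renaming of the test element and the optional reformulation via the maps $S_a$.
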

\begin{proof} For all $x,y,z\in Q$ we have $\varphi(x^{yz})=\varphi(x*{yz})=\varphi(x)*{\varphi(yz)}=\varphi(x)^{\varphi(yz)}$. On the other side 
\begin{multline*}\varphi(x^{yz})=\varphi((x^y)^z)=\varphi(x^y*z)=\varphi(x^y)*{\varphi(z)}=\\=\varphi(x*y)*{\varphi(z)}=(\varphi(x)*\varphi(y))*{\varphi(z)}=\varphi(x)^{\varphi(y)\varphi(z)}
\end{multline*}
 and we have $\varphi(x)^{\varphi(yz)}=\varphi(x)^{\varphi(y)\varphi(z)}$.  
Since $x$ is an arbitrary element of the group $G$, the element $\varphi(yz)(\varphi(y)\varphi(z))^{-1}$ must belong to $\Z(G)$.
\end{proof}
\begin{corollary}
If $G$ is a group with trivial center, then $\Aut(G)=\Aut \left({\rm Conj}(G)\right)$.
\end{corollary}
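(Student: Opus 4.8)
The plan is to prove the two inclusions $\Aut(G)\leq\Aut(\Conj(G))$ and $\Aut(\Conj(G))\leq\Aut(G)$ separately; the first has already been observed at the beginning of this section (every group automorphism $\varphi$ satisfies $\varphi(x^{y^n})=\varphi(x)^{\varphi(y)^n}$, so in particular it induces a quandle automorphism of $\Conj(G)=\Conj_1(G)$), so the real content is the reverse inclusion. For this, I would take an arbitrary $\varphi\in\Aut(\Conj(G))$ and apply the preceding Lemma: for all $x,y\in Q=\Conj(G)$ there is some $z\in\Z(G)$ with $\varphi(xy)=\varphi(x)\varphi(y)z$. Under the hypothesis $\Z(G)=1$ this forces $z=1$, hence $\varphi(xy)=\varphi(x)\varphi(y)$ for all $x,y\in G$, i.e. $\varphi$ is a group homomorphism.

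It then remains only to note that $\varphi$ is bijective (this is built into the definition of a quandle automorphism), so $\varphi$ is in fact a group automorphism of $G$, giving $\Aut(\Conj(G))\subseteq\Aut(G)$. Combined with the already-known inclusion, the two groups coincide as claimed. I do not expect any genuine obstacle here: the statement is essentially an immediate corollary of the Lemma, the only thing to be careful about being that the Lemma is stated for all $x,y\in Q$, i.e. for all $x,y$ ranging over the whole underlying set $G$, so the multiplicativity it yields is genuinely the multiplicativity of $\varphi$ as a self-map of the group $G$.
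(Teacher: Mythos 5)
Your argument is correct and is exactly the intended one: the paper states this as an immediate corollary of the preceding Lemma (with no written proof), relying on precisely the observation that $\Z(G)=1$ forces $\varphi(xy)=\varphi(x)\varphi(y)$, together with the inclusion $\Aut(G)\leq\Aut(\Conj(G))$ noted at the start of the section. Nothing is missing.
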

\begin{proposition}\label{left}
If $G$ is a group with nontrivial center, then $\Aut(G)\neq\Aut \left({\rm Conj} (G)\right)$.
\end{proposition}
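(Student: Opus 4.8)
The plan is to produce, for every group $G$ with $\Z(G)\neq 1$, an explicit automorphism of the quandle $Q=\Conj(G)$ that is not induced by any automorphism of $G$. Since $\Aut(G)$ is a subgroup of $\Aut(Q)$, this is all that is needed: we just want the inclusion $\Aut(G)\leq\Aut(Q)$ to be proper.

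Fix a nontrivial central element $z\in\Z(G)$ and let $R_z\colon G\to G$ be the right translation $R_z(x)=xz$. First I would check that $R_z$ is an automorphism of $Q$: it is a bijection with inverse $R_{z^{-1}}$, and for all $x,y\in G$,
\[
R_z(x*y)=R_z(y^{-1}xy)=y^{-1}xyz,\qquad R_z(x)*R_z(y)=(yz)^{-1}(xz)(yz)=y^{-1}xyz,
\]
where the second computation uses only that $z$ commutes with every element of $G$. Hence $R_z\in\Aut(Q)$.

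Finally I would observe that $R_z\notin\Aut(G)$, viewing $\Aut(G)$ inside $\Aut(Q)$: every group automorphism fixes $1\in G$, whereas $R_z(1)=z\neq 1$. This already yields $\Aut(G)\neq\Aut(\Conj(G))$. The argument has essentially no hard step; the only thing to verify is that $R_z$ respects $*$, which is the one-line centrality computation displayed above. (For finite $G$ one could alternatively just count, since $\{R_z g: z\in\Z(G),\, g\in\Aut(G)\}$ has more than $|\Aut(G)|$ elements, but the translation argument is cleaner and works for infinite $G$ as well.) I would also remark that the maps $R_z$, $z\in\Z(G)$, always form a subgroup of $\Aut(Q)$ isomorphic to $\Z(G)$ via $z\mapsto R_z$, and this is the copy of $\Z(G)$ occurring in the semidirect product $\Z(G)\rtimes\Aut(G)$; whether that semidirect product exhausts $\Aut(\Conj(G))$ is exactly the content of Theorem~\ref{solution}.
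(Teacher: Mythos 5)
Your proof is correct, but it takes a different route from the paper's. You use the central translation $R_z\colon x\mapsto xz$, verify by the one-line centrality computation that it respects conjugation, and note that it moves $1$; this is exactly the map the paper elsewhere denotes $t_a$ and attributes to \cite[Proposition 4.7]{BDS}, where it furnishes the embedding $\Z(G)\rtimes\Aut(G)\hookrightarrow\Aut(\Conj(G))$. So your argument is essentially ``the $\Z(G)$ factor of that semidirect product is already not contained in $\Aut(G)$,'' which is clean, works for infinite $G$, and correctly identifies the connection to Theorem~\ref{solution}. The paper instead exhibits a different automorphism: the transposition of $Q=\Conj(G)$ swapping $1$ with a fixed $1\neq x\in\Z(G)$ and fixing everything else. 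Checking that this is a quandle automorphism requires the observation that central elements are fixed by all $S_y$ and that conjugation preserves the complement of the center, and it again fails to fix $1$. The paper's choice is not arbitrary: such ``permutations supported on the center'' are the germ of the $\Sigma_{|\Z(G)|}$ factor embedded in Proposition~\ref{direct}, so its proof of Proposition~\ref{left} foreshadows the later, finer analysis, whereas yours reuses the translation subgroup already known from \cite{BDS}. Both proofs are complete and equally rigorous.
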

\begin{proof}
Let $1\neq x\in \Z(G)$ and $\varphi$ denotes the bijection of $Q=\Conj(G)$ which fixes all elements of $Q$ except $1$ and $x$ and such that $\varphi(1)=x$, $\varphi(x)=1$. Let us show that $\varphi\in {\rm Aut}(Q)$. Let $y,z$ be some elements from $Q$. If $y\in\{1,x\}$, then $\varphi(y*z)=\varphi(y^z)=\varphi(y)=\varphi(y)*{\varphi(z)}$ since $y, \varphi(y)\in\Z(G)$. If $y\notin\{1,x\}$, then $y^z\notin\{1,x\}$, and hence  $\varphi(y*z)=\varphi(y^z)=y^z=\varphi(y)^{\varphi(z)}=\varphi(y)*\varphi(z)$, i.~e. $\varphi$ is an automorphism of $Q$. Since $\varphi(1)\neq1$, the map $\varphi$ does not belong to $\Aut (G)$.
\end{proof}
\begin{corollary}\label{equal}
Let $G$ be a group. Then $\Aut(G)=\Aut \left({\rm Conj} (G)\right)$ if and only if $\Z(G)=1$.
\end{corollary}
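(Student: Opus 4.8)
The plan is to obtain this as an immediate consequence of the two preceding statements, so very little new work is needed. Recall first that every automorphism $\varphi$ of $G$ satisfies $\varphi(x^{y})=\varphi(x)^{\varphi(y)}$, hence restricts to an automorphism of $Q={\rm Conj}(G)$; thus the inclusion $\Aut(G)\leq\Aut(Q)$ holds for \emph{every} group $G$, and the entire content of the corollary is deciding when this inclusion is an equality and when it is proper.

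For the implication ``$\Z(G)=1\ \Rightarrow\ \Aut(G)=\Aut(Q)$'' I would simply invoke the Corollary proved just after the Lemma above: when $\Z(G)=1$, the Lemma forces $\varphi(xy)=\varphi(x)\varphi(y)$ for all $x,y\in Q$, so any $\varphi\in\Aut(Q)$ is already a group endomorphism, and being bijective it lies in $\Aut(G)$. Combined with the always-true inclusion $\Aut(G)\leq\Aut(Q)$ this yields equality.

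For the converse I would argue contrapositively: if $\Z(G)\neq1$, Proposition \ref{left} produces a bijection of $Q$ which transposes $1$ with a chosen nontrivial central element and fixes everything else, verifies that it is a quandle automorphism, and observes that it is not a group automorphism since it does not fix the identity. Hence $\Aut(G)\neq\Aut(Q)$ in this case. Putting the two implications together gives the stated equivalence.

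There is essentially no obstacle here, since both halves are already established in the excerpt; the only point worth stating explicitly is the inclusion $\Aut(G)\leq\Aut(Q)$, which is what turns each of the two preceding results into exactly one direction of the ``if and only if''.
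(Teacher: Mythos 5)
Your proposal is correct and follows essentially the same route as the paper, which states Corollary \ref{equal} as the immediate combination of the preceding corollary (trivial center implies equality, via the Lemma) and Proposition \ref{left} (nontrivial center implies inequality). Your explicit mention of the standing inclusion $\Aut(G)\leq\Aut\left({\rm Conj}(G)\right)$ is a reasonable clarification but adds nothing beyond what the paper already establishes at the start of Section \ref{sec4}.
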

In \cite[Proposition 4.7]{BDS} it is proved that for a group $G$ there exists an embedding of groups $\Z(G)\rtimes \Aut (G)\hookrightarrow\Aut\left({\rm Conj} (G)\right)$. We are going to prove that if $G$ is a non-abelian group, then $\Aut (G) \times  \Sigma_{|{\rm Z}(G)|}$ is also imbedded into $\Aut\left({\rm Conj} (G)\right)$.

\begin{proposition}\label{direct}Let $G$ be a non-abelian group. Then the group  $\Aut \left({\rm Conj}(G) \right)$ contains a subgroup isomorphic to the direct product $\Aut (G) \times  \Sigma_{|{\rm Z}(G)|}$.
\end{proposition}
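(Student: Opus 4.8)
The plan is to build the desired subgroup of $\Aut(\Conj(G))$ as a set of permutations of $G$ that simultaneously apply a group automorphism and independently permute the (central, hence ``invisible'' to the quandle operation) elements of $\Z(G)$. Concretely, for $f \in \Aut(G)$ and a permutation $\tau \in \Sym(\Z(G))$ I would define a map $\Phi_{f,\tau} : G \to G$ as follows: on the coset structure one observes that for $z \in \Z(G)$ and $g \in G$ one has $g * (zg') = g^{(zg')} = g^{g'}$, so central factors never affect conjugation. The natural candidate is $\Phi_{f,\tau}(g) = f(g)$ whenever $g \notin \Z(G)$, and $\Phi_{f,\tau}(z) = f(\tau(z))$ for $z \in \Z(G)$ — but this only uses $\Sym(\Z(G))$ through $f$, so instead I would more carefully set $\Phi_{f,\tau}$ to agree with $f$ off $\Z(G)$ and, on $\Z(G)$, to equal the composite $f \circ \tau$ (noting $f$ restricts to an automorphism of $\Z(G)$, so this is a bijection of $\Z(G)$ onto itself composed with $f|_{\Z(G)}$). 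The key point to verify is that this is a quandle automorphism: for $x,y \in G$ we must check $\Phi_{f,\tau}(x * y) = \Phi_{f,\tau}(x) * \Phi_{f,\tau}(y) = \Phi_{f,\tau}(x)^{\Phi_{f,\tau}(y)}$. Split into cases according to whether $x \in \Z(G)$: if $x \in \Z(G)$ then $x * y = x$ and $\Phi_{f,\tau}(x) \in \Z(G)$, so both sides equal $\Phi_{f,\tau}(x)$; if $x \notin \Z(G)$, then $x*y = x^y \notin \Z(G)$ as well, so $\Phi_{f,\tau}(x*y) = f(x^y) = f(x)^{f(y)}$, and since $\Phi_{f,\tau}(y)$ and $f(y)$ differ only by a central factor (they are equal unless $y \in \Z(G)$, in which case both lie in $\Z(G)$), conjugation by $\Phi_{f,\tau}(y)$ equals conjugation by $f(y)$ — giving $f(x)^{f(y)} = \Phi_{f,\tau}(x)^{\Phi_{f,\tau}(y)}$. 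This shows $\Phi_{f,\tau} \in \Aut(\Conj(G))$.

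Next I would check that $(f,\tau) \mapsto \Phi_{f,\tau}$ is an injective homomorphism from $\Aut(G) \times \Sym(\Z(G))$. For the homomorphism property one computes $\Phi_{f,\tau} \circ \Phi_{f',\tau'}$ on an element $g$: off $\Z(G)$ everything is composition of the group automorphisms, $f \circ f'$; on $\Z(G)$, since $\Phi_{f',\tau'}$ maps $\Z(G)$ to $\Z(G)$, one gets $\Phi_{f,\tau}(\Phi_{f',\tau'}(z)) = f(\tau(f'(\tau'(z))))$; using that $f'$ and $\tau$ are both permutations of $\Z(G)$ and that $f'$ is a \emph{group} automorphism of $\Z(G)$ (so conjugation by $f$ straightens things out), one identifies this with $\Phi_{ff', \sigma}(z)$ for the appropriate $\sigma \in \Sym(\Z(G))$ — here one should be slightly careful and, if needed, twist the definition of $\Phi_{f,\tau}$ so that $\tau$ is conjugated by $f$, i.e. redefine the $\Z(G)$-component as $g \mapsto (f|_{\Z(G)}) \circ \tau \circ (f|_{\Z(G)})^{-1} \circ f = f \circ \big((f|_{\Z(G)})^{-1}\tau f|_{\Z(G)}\big)$ wait — more simply, put the $\Sym(\Z(G))$ action \emph{after} $f$, i.e. $\Phi_{f,\tau}|_{\Z(G)} = \tau \circ f|_{\Z(G)}$, and then $\Phi_{f,\tau}\Phi_{f',\tau'}|_{\Z(G)} = \tau \circ f|_{\Z(G)} \circ \tau' \circ f'|_{\Z(G)}$, which has the required product form only after rewriting $f|_{\Z(G)} \circ \tau' = (f \tau' f^{-1}) \circ f|_{\Z(G)}$, exhibiting a semidirect rather than direct product. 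To genuinely get a \emph{direct} product I would instead note that $\Aut(G)$ acts on $G$ and separately record a permutation of $\Z(G)$ \emph{by position relative to a fixed transversal}; the cleanest route is: fix nothing, and define $\Phi_{f,\tau}(g) = f(g)$ for $g \notin \Z(G)$ and $\Phi_{f,\tau}(z) = f(z) \cdot c_\tau(z)$ where, after identifying $\Z(G)$ with $f(\Z(G)) = \Z(G)$, the correction is chosen so the two actions commute — but the honest and simplest fix is to observe that since $G$ is non-abelian, $\Z(G) \neq G$, so $G \setminus \Z(G) \neq \emptyset$, and to let $\Aut(G)$ act in the standard way while $\Sym(\Z(G))$ acts by permuting $\Z(G)$ and fixing $G \setminus \Z(G)$ pointwise; the point is that a permutation of $\Z(G)$ \emph{as a bare set} (not respecting the group structure) is still a quandle automorphism of $\Conj(G)$ because, as checked above, central elements are fixed by every $S_y$ and act trivially under every $S_y$, so the restriction of $\Conj(G)$ to $\Z(G)$ is the trivial quandle.

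This last observation is in fact the heart of the matter and I would foreground it: the subquandle of $\Conj(G)$ supported on $\Z(G)$ is trivial, and moreover $\Z(G)$ is a union of (singleton) orbits of $\Inn(\Conj(G))$, so \emph{any} bijection of $G$ that restricts to $f \in \Aut(G)$ on $G \setminus \Z(G)$, maps $\Z(G)$ to $\Z(G)$, and is arbitrary on $\Z(G)$ subject to bijectivity, is automatically a quandle automorphism. Wait — one must double-check that $\Phi$ remains a bijection of \emph{all} of $G$: since $f(\Z(G)) = \Z(G)$ and $f(G \setminus \Z(G)) = G \setminus \Z(G)$, and $\tau$ is a bijection of $\Z(G)$, the composite is a bijection of $G$. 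Then the map $\Aut(G) \times \Sym(\Z(G)) \to \Aut(\Conj(G))$ sending $(f,\tau)$ to ``$f$ off $\Z(G)$, $\tau \circ f|_{\Z(G)}$ on $\Z(G)$''—hmm, to make it a direct-product embedding I will instead send $(f,\tau)$ to ``$f$ off $\Z(G)$, and $\tau$ on $\Z(G)$ where we think of $\Z(G)$ abstractly, i.e. $f|_{\Z(G)}$ and $\tau$ are applied to disjoint ``coordinates''\,'' — the precise device is to note $\Aut(G)$ need not act on $\Z(G)$ at all in the image: define $\Psi_f$ to be $f$ on $G \setminus \Z(G)$ and the identity on $\Z(G)$ (this is a quandle automorphism by the heart-of-the-matter observation, since we only changed a trivial-quandle part to the identity, and it is still a well-defined bijection of $G$ because $\Z(G)$ is $f$-invariant), and define $\Theta_\tau$ to be the identity on $G \setminus \Z(G)$ and $\tau$ on $\Z(G)$; then $f \mapsto \Psi_f$ and $\tau \mapsto \Theta_\tau$ are homomorphisms with commuting images and trivial intersection (an element fixing $G \setminus \Z(G)$ pointwise and lying in the $\Psi$-image forces $f = \mathrm{id}$ since $G = \langle G \setminus \Z(G)\rangle$ for non-abelian $G$), so $(f,\tau) \mapsto \Psi_f \Theta_\tau$ is the desired embedding of $\Aut(G) \times \Sym(\Z(G)) = \Aut(G) \times \Sigma_{|\Z(G)|}$.

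\textbf{Main obstacle.} The only subtle point is the well-definedness of $\Psi_f$ as a permutation of $G$ that is the identity on $\Z(G)$: this needs $f(\Z(G)) = \Z(G)$ (true) \emph{and} that restricting $f$ to $G \setminus \Z(G)$ still yields a bijection of that set (true, as $f$ preserves $\Z(G)$ setwise), so that redefining it to be the identity on $\Z(G)$ does not destroy injectivity or surjectivity — and then checking this redefined map respects $*$, which reduces exactly to the case analysis above plus the fact that $\Z(G)$ carries the trivial subquandle. I expect the genuinely non-cosmetic work to be just that verification; the non-abelian hypothesis enters only to guarantee $G \setminus \Z(G)$ generates $G$ (so the two factors intersect trivially) and, implicitly, to make the statement non-vacuous.
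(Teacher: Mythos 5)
Your final construction --- $\Psi_f$ acting as $f$ on $G\setminus \Z(G)$ and as the identity on $\Z(G)$, together with $\Theta_\tau$ acting as $\tau$ on $\Z(G)$ and as the identity elsewhere, with commuting images and trivial intersection --- is exactly the paper's proof, and your verifications (bijectivity, the quandle-automorphism check via the triviality of the central subquandle, and injectivity using that $G\setminus\Z(G)$ generates $G$ when $G$ is non-abelian) are correct. The lengthy detours through semidirect-product variants are unnecessary but do not affect the validity of the argument you settle on.
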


\begin{proof} Consider the map $f:\Aut(G)\to\Aut({\rm Conj}(G))$ which maps every automorphism $\varphi$ of the group $G$ to the map $f(\varphi):\Conj (G) \to \Conj (G)$ which acts on $\Conj (G)$ by the following rule.
$$f(\varphi)(x)=\begin{cases}
\varphi(x),&x\notin \Z(G),\\
x,& x\in \Z(G).
\end{cases}$$
Since $\varphi\in \Aut (G)$, the map $\varphi$ maps $\Z(G)$ to $\Z(G)$ and it maps $G\setminus \Z(G)$ to $G\setminus \Z(G)$. Therefore $f(\varphi)$ is a bijection on $\Conj (G)$. It is easy to show that $f(\varphi)$ preserves the operation of $\Conj (G)$, and hence it is an automorphism of $\Conj (G)$.

For automorphisms $\varphi,\psi\in \Aut (G)$ and element $x\in \Z(G)$ we have $f(\varphi\psi)(x)=x=f(\varphi)f(\psi)(x)$. If $x\notin \Z(G)$, then $\psi(x)\notin \Z(G)$, and hence $f(\varphi\psi)(x)=\varphi(\psi(x))=f(\varphi)(f(\psi)(x))=f(\varphi)f(\psi)(x)$. Therefore $f$ is a homomorphism from $\Aut (G)$ to $\Aut \left({\rm Conj} (G)\right)$. If $f(\varphi)=f(\psi)$, then for all $x\notin \Z(G)$ we have $\varphi(x)=f(\varphi)(x)=f(\psi)(x)=\psi(x)$. In particular, if $x \in G\setminus\Z(G)\neq\varnothing$ and $y\in \Z(G)$, then $\varphi(x)=\psi(x)$, $\varphi(xy)=\psi(xy)$. Therefore $\varphi(y)=\psi(y)$ and $\varphi=\psi$. Hence $f$ is an injective homomorphism from ${\rm Aut}(G)$ to $\Aut({\rm Conj}(G))$.

Consider the map $g:\Sigma_{|{\rm Z}(G)|}\to\Aut({\rm Conj}(G))$ which maps every permutation $\sigma$ of the elements from $\Z(G)$ to the map $g(\sigma)$ which acts on $\Conj (G)$ by the following rule.
$$g(\sigma)(x)=\begin{cases}
x,&x\notin \Z(G),\\
\sigma(x),& x\in \Z(G).
\end{cases}$$
Similarly to the case of the map $f$ it is easy to see that $g$ is an injective homomorphism from $\Sigma_{|{\rm Z}(G)|}$ to $\Aut\left({\rm Conj} (G) \right)$. Let $H$ be a subgroup of $\Aut (\Conj (G))$ generated by $H_1=f(\Aut (G))$ and $H_2=g( \Sigma_{|{\rm Z}(G)|})$. Both groups $H_1,H_2$ are obviously normal in $H$ (moreover, $H_2$ is normal in $\Aut({\rm Conj}(G))$) and $H_1\cap H_2=1$, therefore $H=H_1\times H_2={\rm Aut}(G)\times \Sigma_{|{\rm Z}(G)|}$.
\end{proof}
\begin{corollary} Let $G$ be a group. Then the group $\Aut \left({\rm Conj} (G)\right)$ contains a  subgroup isomorphic to the group
$\Z(G)$.
\end{corollary}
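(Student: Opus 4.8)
The plan is to realise $\Z(G)$ inside $\Aut({\rm Conj}(G))$ as a subgroup of a symmetric group that already embeds into $\Aut({\rm Conj}(G))$. The key remark is that the map $g:\Sigma_{|\Z(G)|}\to\Aut({\rm Conj}(G))$ constructed in the proof of Proposition \ref{direct}, namely $g(\sigma)(x)=x$ for $x\notin\Z(G)$ and $g(\sigma)(x)=\sigma(x)$ for $x\in\Z(G)$, is an injective homomorphism for \emph{every} group $G$: the verification that each $g(\sigma)$ preserves the operation of ${\rm Conj}(G)$ and that $g$ is an injective homomorphism uses only that conjugation by any element of $G$ fixes $\Z(G)$ pointwise and carries $G\setminus\Z(G)$ into itself, not that $G$ is non-abelian. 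Hence $\Sigma_{|\Z(G)|}$ embeds into $\Aut({\rm Conj}(G))$ for all $G$; when $G$ is abelian this degenerates to the obvious statement that every permutation of $G=\Z(G)$ is an automorphism of the trivial quandle ${\rm Conj}(G)$.

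The second ingredient is Cayley's theorem: the left regular action of the group $\Z(G)$ on its own underlying set is faithful, which gives an embedding $\Z(G)\hookrightarrow\Sym(\Z(G))=\Sigma_{|\Z(G)|}$, valid whether or not $\Z(G)$ is finite. Composing this with $g$ produces the desired injective homomorphism $\Z(G)\hookrightarrow\Aut({\rm Conj}(G))$, so $\Aut({\rm Conj}(G))$ contains a subgroup isomorphic to $\Z(G)$.

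I would also point out a shorter route, for completeness: the embedding $\Z(G)\rtimes\Aut(G)\hookrightarrow\Aut({\rm Conj}(G))$ from \cite[Proposition 4.7]{BDS}, recalled before Proposition \ref{direct}, already exhibits the first factor $\Z(G)$ of that semidirect product as a subgroup of $\Aut({\rm Conj}(G))$. The argument via $g$ and Cayley's theorem is, however, self-contained within the present section.

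The only point needing a moment's care — hardly an obstacle — is that Proposition \ref{direct} is literally stated for non-abelian $G$, so one should not quote the containment $\Sigma_{|\Z(G)|}\leq\Aut({\rm Conj}(G))$ as a black box; instead one re-uses the explicit map $g$ from its proof and observes that non-abelianness played no role in establishing the properties of $g$, or, equivalently, one treats the abelian case separately using that ${\rm Conj}(G)$ is then the trivial quandle and $\Aut({\rm Conj}(G))=\Sigma_{|G|}=\Sigma_{|\Z(G)|}$.
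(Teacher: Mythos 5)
Your proposal is correct and follows essentially the same route as the paper: embed $\Z(G)$ into $\Sigma_{|\Z(G)|}$ via the regular action (the paper uses right multiplications, you use left — immaterial), then handle the abelian case by noting ${\rm Conj}(G)$ is trivial so $\Aut({\rm Conj}(G))=\Sigma_{|G|}$, and the non-abelian case via Proposition \ref{direct}, exactly as in your final paragraph. Your extra observation that the map $g$ itself never uses non-abelianness, and the alternative via \cite[Proposition 4.7]{BDS}, are both sound but not needed.
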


\begin{proof} The group $\Z(G)$ acts faithfully on $\Z(G)$ by right multiplications and therefore $\Z(G)$ is a subgroup of $\Sigma_{|{\rm Z}(G)|}$. If $G$ is abelian then ${\rm Conj}(G)$ is  the trivial quandle and  ${\rm Aut}({\rm Conj}(G))$ is isomorphic to $\Sigma_{|{\rm Z}(G)|}$. If $G$ is not abelian, then by Proposotion \ref{direct} the group $\Sigma_{|{\rm Z}(G)|}$ is a subgroup of $\Aut \left({\rm Conj} (G)\right)$.
\end{proof}
The following result says that if $G\neq1$ is a finite abelian group, then the conclusion of Proposition \ref{direct} is correct if and only if $G=\mathbb{Z}_2$.
\begin{proposition}\label{abbb} Let $G$ be a finite abelian group. Then ${\rm Aut}({\rm Conj}(G))$ contains a subgroup isomorphic to ${\rm Aut}(G)\times\Sigma_{|{\rm Z}(G)|}$ if and only if $G=1$ or $G=\mathbb{Z}_2$.
\end{proposition}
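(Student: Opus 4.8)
The plan is to use, from the very start, that $G$ is abelian. In that case $\Conj(G)$ is the trivial quandle on the underlying set of $G$, so $\Aut(\Conj(G))=\Sigma_{|G|}$, and moreover $\Z(G)=G$, hence $\Sigma_{|\Z(G)|}=\Sigma_{|G|}$. Writing $n=|G|$, the proposition therefore reduces to the purely group-theoretic statement that $\Sigma_n$ contains a subgroup isomorphic to $\Aut(G)\times\Sigma_n$ if and only if $G=1$ or $G=\mathbb{Z}_2$. This reformulation is the key move; everything afterwards is bookkeeping.

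For the ``only if'' direction I would argue by cardinality. If $\Aut(G)\times\Sigma_n$ embeds into $\Sigma_n$, then by Lagrange's theorem $|\Aut(G)|\cdot n!=|\Aut(G)\times\Sigma_n|$ divides $|\Sigma_n|=n!$, which forces $|\Aut(G)|=1$. It then remains to identify the finite abelian groups with trivial automorphism group. If $G$ has an element of order $>2$, the inversion map $x\mapsto x^{-1}$ is a nontrivial automorphism, so $\Aut(G)=1$ forces $G$ to be an elementary abelian $2$-group $\mathbb{Z}_2^{k}$. If $k\geq 2$, choosing a basis $e_1,\dots,e_k$ over $\mathbb{Z}_2$, the linear map swapping $e_1$ and $e_2$ and fixing the rest is a nontrivial automorphism (equivalently $\Aut(G)=\GL_k(\mathbb{Z}_2)\neq 1$). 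Hence $\Aut(G)=1$ only when $k\leq 1$, i.e.\ $G=1$ or $G=\mathbb{Z}_2$.

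For the ``if'' direction: if $G=1$ then $n=1$, $\Aut(G)=1$, and $\Aut(G)\times\Sigma_1=1=\Sigma_1$; if $G=\mathbb{Z}_2$ then $n=2$, $\Aut(G)=1$, and $\Aut(G)\times\Sigma_2=\Sigma_2$. In both cases the required subgroup is all of $\Sigma_n$ itself, so the conclusion is immediate.

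I do not expect a serious obstacle here: the argument is a cardinality count combined with the elementary classification of finite abelian groups admitting no nontrivial automorphism. The only point requiring a moment's care is the opening observation that $\Conj(G)$ collapses to the trivial quandle and that $\Z(G)=G$, after which the statement is essentially forced.
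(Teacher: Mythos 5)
Your proof is correct and follows essentially the same route as the paper: both reduce to the observation that for abelian $G$ the quandle $\Conj(G)$ is trivial, so $\Aut(\Conj(G))=\Sigma_{|G|}$ while $\Aut(G)\times\Sigma_{|\Z(G)|}=\Aut(G)\times\Sigma_{|G|}$, and then a Lagrange/order count forces $|\Aut(G)|=1$, i.e.\ $G=1$ or $G=\mathbb{Z}_2$. Your explicit classification of finite abelian groups with trivial automorphism group is a small elaboration the paper leaves implicit, but the argument is the same.
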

\begin{proof} If $G=1$ or $G=\mathbb{Z}_2$, then ${\rm Aut}(G)=1$ and ${\rm Aut}({\rm Conj}(G))={\rm Aut}(G)\times\Sigma_{|{\rm Z}(G)|}$. Conversely, since $G$ is an abelian group, the quandle ${\rm Conj}(G)$ is the trivial quandle and ${\rm Aut}({\rm Conj}(G))$ is the full permutation group $\Sigma_{|G|}$ which has the order $|G|!$. The group ${\rm Aut}(G)\times \Sigma_{|{\rm Z}(G)|}$ is equal to the group ${{\rm Aut}(G)}\times\Sigma_{|G|}$ and has order $|{\rm Aut}(G)|\cdot |G|!$. Therefore $|{\rm Aut}(G)|=1$ and $G=1$ or $G=\mathbb{Z}_2$. 
\end{proof} 
 
 The following result describes all finite groups for which ${\rm Aut}({\rm Conj}(G))={\rm Aut}(G)\times \Sigma_{|{\rm Z}(G)|}$.
\begin{theorem}
Let $G$ be a finite group. Then ${\rm Aut}({\rm Conj}(G))={\rm Aut}(G)\times \Sigma_{|{\rm Z}(G)|}$ if and only if $\Z(G)=1$ or $G=\mathbb{Z}_2$.
\end{theorem}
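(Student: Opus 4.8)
The plan is to dispatch the ``if'' direction by two short computations and to attack the ``only if'' direction by splitting into the cases $G$ abelian and $G$ non-abelian, the latter being where essentially all the work is. For ``if'': if $\Z(G)=1$ then $\Sigma_{|\Z(G)|}$ is trivial and Corollary~\ref{equal} already gives $\Aut(\Conj(G))=\Aut(G)=\Aut(G)\times\Sigma_{|\Z(G)|}$; if $G=\mathbb{Z}_2$ then $\Conj(G)$ is the trivial quandle on two elements, so $\Aut(\Conj(G))=\Sigma_2$, while $\Aut(\mathbb{Z}_2)=1$ and $|\Z(\mathbb{Z}_2)|=2$, so again $\Aut(G)\times\Sigma_{|\Z(G)|}=\Sigma_2$.

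For ``only if'', I would assume $\Aut(\Conj(G))=\Aut(G)\times\Sigma_{|\Z(G)|}$, meaning that the subgroup $H=f(\Aut(G))\times g(\Sigma_{|\Z(G)|})$ exhibited in the proof of Proposition~\ref{direct} is the whole automorphism group. If $G$ is abelian this equality in particular says that $\Aut(\Conj(G))$ contains a copy of $\Aut(G)\times\Sigma_{|\Z(G)|}$, so Proposition~\ref{abbb} forces $G=1$ or $G=\mathbb{Z}_2$ (and $G=1$ gives $\Z(G)=1$), which is the desired conclusion. So the real case is $G$ non-abelian, where I would argue by contradiction: suppose there is $z\in\Z(G)$ with $z\neq1$.

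The heart of the proof is to write down an automorphism of $\Conj(G)$ that cannot belong to $H$, namely the right translation $R_z\colon x\mapsto xz$. First I would check it is a quandle automorphism: since $z$ is central, $R_z(x)*R_z(y)=(xz)*(yz)=(yz)^{-1}(xz)(yz)=y^{-1}xyz=R_z(y^{-1}xy)=R_z(x*y)$; also $R_z$ maps $\Z(G)$ onto itself and $G\setminus\Z(G)$ onto itself (if $xz\in\Z(G)$ then $x\in\Z(G)$). Then, assuming $R_z\in H$, I would write $R_z=f(\varphi)g(\sigma)$ with $\varphi\in\Aut(G)$ and $\sigma\in\Sigma_{|\Z(G)|}$, and restrict both sides to $G\setminus\Z(G)$: there $g(\sigma)$ is the identity and $f(\varphi)$ acts as $\varphi$, so $\varphi(x)=xz$ for every $x\in G\setminus\Z(G)$. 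Finally I would pick $x,y\in G\setminus\Z(G)$ with $xy\notin\Z(G)$ --- such a pair exists, because otherwise every non-central $y$ satisfies $y=x^{-1}(xy)\in x^{-1}\Z(G)$, so $G\setminus\Z(G)$ lies in the single coset $x^{-1}\Z(G)$, forcing $[G:\Z(G)]\le2$, hence $G/\Z(G)$ cyclic and $G$ abelian --- and then $xyz=\varphi(xy)=\varphi(x)\varphi(y)=(xz)(yz)=xyz^2$ (using centrality of $z$), so $z=1$, a contradiction. Hence $\Z(G)=1$.

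The main obstacle is just the non-abelian case, and within it the two elementary points flagged above: that $R_z$ respects the quandle operation, and that a non-abelian finite group always has non-central $x,y$ with $xy$ non-central. (If instead the statement is meant as an abstract isomorphism rather than an equality of subgroups, I would finish with the same translations by a counting argument: the subgroup generated by all the $R_z$ and by $g(\Sigma_{|\Z(G)|})$ has order $|\Z(G)|\cdot|\Z(G)|!$ and meets $f(\Aut(G))$ trivially, so $|\Aut(\Conj(G))|\ge|\Aut(G)|\cdot|\Z(G)|\cdot|\Z(G)|!>|\Aut(G)|\cdot|\Z(G)|!$ as soon as $|\Z(G)|\ge2$.)
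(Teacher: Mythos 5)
Your proof is correct and follows essentially the same route as the paper: both hinge on the central translation $x\mapsto xz$ (the paper's $t_a$) being a quandle automorphism that cannot have the form $f(\varphi)g(\sigma)$ unless $z=1$, using a pair of non-central elements with non-central product, and both then fall back on Proposition~\ref{abbb} to handle the abelian case. The only differences are cosmetic (you split abelian/non-abelian up front and verify the translation and the existence of the pair $x,y$ explicitly, where the paper cites \cite{BDS} and argues contrapositively), plus your optional counting remark, which is a harmless bonus.
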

\begin{proof}Corollary \ref{equal} and Proposition \ref{abbb} imply that if $\Z(G)=1$ or $G=\mathbb{Z}_2$, then ${\rm Aut}({\rm Conj}(G))={\rm Aut}(G)\times \Sigma_{|{\rm Z}(G)|}$ and we need to prove that groups with trivial center and $\mathbb{Z}_2$ are all groups for which ${\rm Aut}({\rm Conj}(G))={\rm Aut}(G)\times \Sigma_{|{\rm Z}(G)|}$.

Suppose that $\Z(G)\neq 1$. Then according to Proposition \ref{direct} the group ${\rm Aut}(G)\times \Sigma_{|{\rm Z}(G)|}$ consists of all automorphisms of the form
\begin{equation}\label{gfo}
x\mapsto\begin{cases}
\varphi(x),&x\notin \Z(G),\\
\sigma(x),& x\in \Z(G),
\end{cases}
\end{equation}
where $\varphi$ is an automorphism of $G$ and $\sigma$ is a permutation of elements from $\Z(G)$. Following \cite{BDS}, for an element $a\in \Z(G)$ denote by $t_a$ the automorphism of the quandle ${\rm Conj}(G)$ of the form $t_a:x\mapsto xa$. 

Since $\Z(G)\neq 1$, for an element $1\neq a\in \Z(G)$ the map $t_a:x\mapsto xa$ must have the form (\ref{gfo}). If in $G$ there exist two elements $x,y\notin\Z(G)$ such that $xy\notin\Z(G)$, then from equality (\ref{gfo}) we have $\varphi(x)\varphi(y)=\varphi(xy)=t_a(xy)=axy$. On the other hand  from (\ref{gfo}) we have $\varphi(x)=t_a(x)=ax$ and $\varphi(y)=t_a(y)=ay$. Therefore $a=1$, what contradicts the choice of $a$ and we proved that for all elements $x,y\notin\Z(G)$ the product $xy$ belongs to $\Z(G)$.
It means that $|G:\Z(G)|\leq2$ and $G$ is abelian. Therefore by Proposition \ref{abbb} we have $G=\mathbb{Z}_2$.
\end{proof}

As we already mentioned, in \cite[Proposition 4.7]{BDS} it is proved that the group $\Aut\left({\rm Conj} (G)\right)$ contains a subgroup isomorphic to the semidirect product $\Z(G)\rtimes \Aut (G)$. The following theorem describes all finite groups which satisfy the equality $\Aut\left({\rm Conj} (G)\right)=\Z(G)\rtimes \Aut (G)$, what gives the answer to \cite[Problem 4.8]{BDS}.
\begin{theorem}\label{solution}
Let $G$ be a finite group. Then $\Aut \left({\rm Conj} (G)\right)=\Z(G)\rtimes \Aut(G)$ if and only if either $\Z(G)=1$ or $G$ is one of the groups  ${\mathbb{Z}_2}$, $\mathbb{Z}_2^2$ or $\mathbb{Z}_3$.
\end{theorem}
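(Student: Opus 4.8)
The plan is to prove the two implications separately, splitting the forward direction (the stated equality $\Rightarrow$ the listed groups) into the cases $G$ abelian and $G$ non-abelian. For the ``if'' direction, first suppose $\Z(G)=1$: then $\Aut(\Conj(G))=\Aut(G)$ by Corollary~\ref{equal}, while $\Z(G)\rtimes\Aut(G)=\Aut(G)$, so the two groups coincide. Now suppose $G\in\{\mathbb{Z}_2,\mathbb{Z}_2^2,\mathbb{Z}_3\}$: then $\Conj(G)$ is the trivial quandle, so $\Aut(\Conj(G))=\Sigma_{|G|}$ has order $2$, $24$, $6$ respectively, while $\Z(G)\rtimes\Aut(G)=G\rtimes\Aut(G)$ has order $|G|\cdot|\Aut(G)|$, namely $2\cdot 1=2$, $4\cdot 6=24$, $3\cdot 2=6$ respectively; since $\Z(G)\rtimes\Aut(G)$ embeds into $\Aut(\Conj(G))$ by \cite[Proposition~4.7]{BDS}, equality of orders forces equality of the groups.

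For the ``only if'' direction, assume $\Aut(\Conj(G))=\Z(G)\rtimes\Aut(G)$ with $\Z(G)\neq1$; I first claim that $G$ is abelian. Suppose not, and fix $1\neq z\in\Z(G)$. As in the proof of Proposition~\ref{left}, the permutation $\psi$ of $\Conj(G)$ that interchanges $1$ and $z$ and fixes every other element is an automorphism of $\Conj(G)$. By \cite[Proposition~4.7]{BDS}, the subgroup $\Z(G)\rtimes\Aut(G)$ of $\Aut(\Conj(G))$ consists exactly of the maps $x\mapsto\varphi(x)a$ with $a\in\Z(G)$ and $\varphi\in\Aut(G)$. If $\psi$ were such a map, then evaluating at $1$ gives $a=z$, and evaluating at any non-central $x$ (such an $x$ exists since $G$ is non-abelian) gives $\varphi(x)=xz^{-1}$. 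But a non-abelian group always contains non-central elements $x,y$ whose product $xy$ is again non-central (otherwise every product of two non-central elements lies in $\Z(G)$, which forces $|G:\Z(G)|\leq2$ and hence $G$ abelian), and then $xyz^{-1}=\varphi(xy)=\varphi(x)\varphi(y)=xyz^{-2}$, so $z=1$, a contradiction. Hence $G$ is abelian.

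It remains to treat abelian $G$ with $\Z(G)=G\neq1$. Here $\Conj(G)$ is the trivial quandle, so the hypothesis becomes $|G|!=|\Z(G)\rtimes\Aut(G)|=|G|\cdot|\Aut(G)|$, that is, $|\Aut(G)|=(|G|-1)!$. Since $\Aut(G)$ acts faithfully on the $(|G|-1)$-element set $G\setminus\{1\}$, this means $\Aut(G)$ is the full symmetric group on $G\setminus\{1\}$. Because automorphisms preserve the orders of elements, this already fails whenever $G$ has two non-identity elements of distinct orders; and when all non-identity elements of $G$ have the same (necessarily prime) order, $G=\mathbb{Z}_p^k$ and the condition reads $|\GL_k(\mathbb{F}_p)|=(p^k-1)!$, which a routine size comparison (using $|\GL_k(\mathbb{F}_p)|<p^{k^2}$) shows fails once $p^k\geq5$. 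Therefore $|G|\leq4$, and inspecting the abelian groups of orders $2$, $3$, $4$ leaves precisely $\mathbb{Z}_2$, $\mathbb{Z}_3$ and $\mathbb{Z}_2^2$.

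The step I expect to be the main obstacle is the reduction to the abelian case: everything there hinges on the explicit automorphism $\psi$ and on the elementary but essential fact---already used in the proof of the preceding theorem---that a non-abelian group has non-central elements $x,y$ with $xy$ non-central, which is exactly what yields the contradiction $z=1$. By contrast the abelian case is light: the equality is merely the statement that $\Aut(G)$ realizes the whole symmetric group on $G\setminus\{1\}$, and excluding all but three small groups is a routine order estimate, the only slightly tedious point being the elementary abelian groups of rank at least $2$ and order greater than $4$.
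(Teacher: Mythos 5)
Your proposal is correct and follows the paper's proof in all essentials: the ``if'' direction by Corollary~\ref{equal} plus an order count for the three small groups, and the ``only if'' direction by exhibiting an automorphism of $\Conj(G)$ that permutes only the center, showing it cannot have the form $t_a\varphi$ unless every product of two non-central elements is central, and concluding $|G:\Z(G)|\leq 2$, hence $G$ abelian. The only genuine divergence is the abelian endgame: after reaching $|\Aut(G)|=(|G|-1)!$, the paper picks five distinct elements $\{1,x,y,xy,t\}$ and extends the permutation fixing $1,x,y$ and sending $xy$ to $t$ to an automorphism, forcing $t=xy$ and hence $|G|\leq 4$; you instead observe that all non-identity elements must share a prime order, so $G=\mathbb{Z}_p^k$, and compare $|\GL_k(\mathbb{F}_p)|<p^{k^2}$ with $(p^k-1)!$. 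Both are routine and correct; the paper's version avoids the classification $G=\mathbb{Z}_p^k$ and the $\GL$ estimate, while yours makes the failure for $|G|\geq 5$ a purely numerical statement.
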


\begin{proof} The if-part of the theorem is simple.
By Corollary \ref{equal} if the center of the group $G$ is trivial, then $\Aut \left({\rm Conj} (G)\right)=\Aut (G)=\Z(G)\rtimes \Aut (G)$. For groups $\mathbb{Z}_2$, $\mathbb{Z}_2^2$ and $\mathbb{Z}_3$ we have the following equalities
\begin{align}
\notag {\rm Aut}({\rm Conj}(\mathbb{Z}_2))&=\Sigma_2=\mathbb{Z}_2=\mathbb{Z}_2\rtimes {\rm Aut}(\mathbb{Z}_2),\\
\notag {\rm Aut}({\rm Conj}(\mathbb{Z}_2^2))&=\Sigma_4=\mathbb{Z}_2^2\rtimes\Sigma_3=\mathbb{Z}_2^2\rtimes{\rm Aut}(\mathbb{Z}_2^2),\\
\notag {\rm Aut}({\rm Conj}(\mathbb{Z}_3))&=\Sigma_3=\mathbb{Z}_3\rtimes\Sigma_2=\mathbb{Z}_3\rtimes{\rm Aut}(\mathbb{Z}_3),
\end{align}
 and we need to prove that groups without center and the groups ${\mathbb{Z}_2}$, $\mathbb{Z}_2^2$ and $\mathbb{Z}_3$ are all the groups for which  $\Aut \left({\rm Conj} (G)\right)=\Z(G)\rtimes  \Aut (G)$.

For the element $a\in \Z(G)$ denote by $t_a$ the automorphism of the quandle ${\rm Conj}(G)$ of the form $t_a:x\mapsto xa$. The group generated by $t_a$ for all $a\in \Z(G)$ is isomorphic to $\Z(G)$ and the subgroup of $\Aut({\rm Conj}(G))$ which is isomorphic to $\Z(G)\rtimes  \Aut (G)$ consists of the automorphisms of the form $t_a\varphi$ for $a\in \Z(G)$, $\varphi\in{\rm Aut}(G)$ (see \cite[Proposition 4.7]{BDS} for details). So, we need to prove that if every automorphism of ${\rm Conj}(G)$ has the form $t_a\varphi$ for some $a\in \Z(G)$ and $\varphi\in{\rm Aut}(G)$, then either  $\Z(G)=1$ or $G$ is one of the groups $\mathbb{Z}_2$, $\mathbb{Z}_2^2$ or $\mathbb{Z}_3$. 

Suppose that $\Z(G)\neq1$. For a permutation $\sigma$ of the elements from $\Z(G)$ denote by $f$ the automorphism of $\Conj(G)$ of the form
$$f(x)=\begin{cases}
x,&x\notin \Z(G),\\
\sigma(x),& x\in \Z(G).
\end{cases}$$
Suppose that $f=t_a\varphi$ for $a\in \Z(G)$ and $\varphi\in \Aut (G)$. If in $G$ there exist two elements $x,y\notin \Z(G)$ such that $xy\notin \Z(G)$, then
\begin{eqnarray}
\notag x&=&f(x)=t_a\varphi(x)=a\varphi(x),\\
\notag y&=&f(y)=t_a\varphi(y)=a\varphi(y),\\
\notag xy&=&f(xy)=t_a\varphi(xy)=a\varphi(x)\varphi(y).
\end{eqnarray}
These equalities imply that $a=1$ and $f=\varphi$. Since $\Z(G)\neq 1$, we can choose $\sigma$ such that $\sigma(1)\neq1$ and $f$ cannot belong to ${\rm Aut}(G)$. Therefore for every two elements $x,y\notin \Z(G)$ the product $xy$ belongs to $\Z(G)$. It means that $|G:\Z(G)|\leq 2$, therefore $G$ is abelian and $|{\rm Aut}({\rm Conj}(G))|=|\Sigma_{|G|}|=|G|!$. Since every automorphism of $G$ fixes the unit element, we have $|{\rm Aut}(G)|\leq(|G|-1)!$ and since $\Aut \left({\rm Conj} (G)\right)=\Z(G)\rtimes \Aut(G)=G\rtimes \Aut(G)$, we have the equality $|{\rm Aut}(G)|=(|G|-1)!$, i.~e. every permutation of nontrivial elements from $G$ is an automorphism of $G$, in particular, all nontrivial elements of $G$ have the same order. 

Suppose that in $G$ there exist at least $5$ distinct elements $\{1,x,y,xy,t\}$. Since every permutation of elements from $G$ which fixes $1$ is an automorphism of $G$, the map $\varphi(1)=1$, $\varphi(x)=x$, $\varphi(y)=y$, $\varphi(xy)=t$ can be extended to the automorphism of $G$. It means, that $t=\varphi(xy)=\varphi(x)\varphi(y)=xy$, what contradicts the fact that the elements $\{1,x,y,xy,t\}$ are distinct. Therefore the order of $G$ is less then or equal to $4$ and since all nontrivial elements of $G$ have the same order, $G$ is one of the groups $\mathbb{Z}_2$, $\mathbb{Z}_2^2$ or $\mathbb{Z}_3$.
\end{proof}\bigskip
\section{Automorphisms of core quandles and Takasaki quandles}\label{sec5}
 The following proposition is an analogue of \cite[Proposition 4.7]{BDS} for core quandles of groups.
\begin{proposition}\label{coreaut}Let $G$ be a group with the center $\Z(G)$. Then ${\rm Aut}({\rm Core}(G)) $ has a subgroup isomorphic to the group $\Z(G)\rtimes{\rm Aut}(G)$.
\end{proposition}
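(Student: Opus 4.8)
Here is the plan. The goal is to exhibit inside $\Aut(\Core(G))$ two explicit subgroups, one a copy of $\Aut(G)$ and one a copy of $\Z(G)$, and then to check that they fit together into the semidirect product $\Z(G)\rtimes\Aut(G)$ in which $\Aut(G)$ acts on $\Z(G)$ by restriction of automorphisms.

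\textbf{Step 1: embedding $\Aut(G)$.} For $\varphi\in\Aut(G)$, the map $\varphi\colon G\to G$ is a bijection of the underlying set of $\Core(G)$, and
$\varphi(x\ast y)=\varphi(yx^{-1}y)=\varphi(y)\varphi(x)^{-1}\varphi(y)=\varphi(x)\ast\varphi(y)$,
so $\varphi\in\Aut(\Core(G))$. This gives an injective homomorphism $\Aut(G)\hookrightarrow\Aut(\Core(G))$; denote its image by $K$.

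\textbf{Step 2: embedding $\Z(G)$.} For $a\in\Z(G)$ define $\lambda_a\colon G\to G$ by $\lambda_a(x)=ax$. It is a bijection with inverse $\lambda_{a^{-1}}$, and using that $a$ is central one computes $\lambda_a(x\ast y)=a\,yx^{-1}y=(ay)(x^{-1}a^{-1})(ay)=(ay)(ax)^{-1}(ay)=\lambda_a(x)\ast\lambda_a(y)$, so $\lambda_a\in\Aut(\Core(G))$ (left translation by a central element is a quandle automorphism; right translation $x\mapsto xa$ would serve equally well since $a$ is central). Moreover $\lambda_a\lambda_b=\lambda_{ab}$, and $\lambda_a=\id$ forces $a=1$; hence $a\mapsto\lambda_a$ is an injective homomorphism $\Z(G)\hookrightarrow\Aut(\Core(G))$ with image, say, $N$.

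\textbf{Step 3: assembling the semidirect product.} For $\varphi\in\Aut(G)$ and $a\in\Z(G)$ one has $\varphi\lambda_a\varphi^{-1}(x)=\varphi\big(a\,\varphi^{-1}(x)\big)=\varphi(a)\,x=\lambda_{\varphi(a)}(x)$, and $\varphi(a)\in\Z(G)$; thus $K$ normalizes $N$, and since $N$ is abelian it normalizes itself, so $N\trianglelefteq H$, where $H:=\langle N,K\rangle$. If $\lambda_a=\varphi$ with $\varphi\in K$, then evaluating at the identity gives $a=\varphi(1)=1$, so $N\cap K=\{\id\}$. Finally $NK$ is closed under multiplication, since $\lambda_a\varphi\cdot\lambda_b\psi=\lambda_a\lambda_{\varphi(b)}\varphi\psi=\lambda_{a\varphi(b)}\varphi\psi$, whence $H=NK=N\rtimes K\cong\Z(G)\rtimes\Aut(G)$, the action being precisely $\varphi\cdot a=\varphi(a)$. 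This is the semidirect product in the statement.

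I do not expect any genuine obstacle: the whole argument is a short sequence of formal verifications. The only points requiring a little care are the order convention in the operation $x\ast y=yx^{-1}y$ and the repeated use of the centrality of $a$ in Steps 2 and 3; once $\lambda_a$ is correctly identified as a quandle automorphism, the normality, triviality of intersection, and closure under product are immediate.
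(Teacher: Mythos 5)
Your proposal is correct and follows essentially the same route as the paper: embed $\Aut(G)$ by its natural action on $\Core(G)$, embed $\Z(G)$ via the translations $x\mapsto ax$ (the paper's $t_a$), verify $\varphi t_a\varphi^{-1}=t_{\varphi(a)}$, and note the two subgroups intersect trivially because automorphisms of $G$ fix $1$. Your extra explicit verifications (that $\lambda_a$ preserves $\ast$ and that $NK$ is closed under multiplication) are points the paper treats as obvious, but the argument is the same.
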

\begin{proof} Denote by $*$ the operation in the quandle ${\rm Core}(G)$, i.~e. $x*y=yx^{-1}y$ for all $x,y\in G$. If $\varphi$ is an automorphism of $G$, then $\varphi(yx^{-1}y)=\varphi(y)\varphi(x)^{-1}\varphi(y)$. Therefore $\varphi(x*y)=\varphi(x)*\varphi(y)$ and $\varphi$ induces the automorphism of $\Core(G)$. Denote by $H_1$ the subgroup of $\Core(G)$ generated by all automorphisms induced by automorphisms from ${\rm Aut}(G)$. For an element $a\in \Z(G)$ denote by $t_a$ the bijection of $\Core(G)$ of the form $t_a:x\mapsto ax$, which is obviously an automorphism of $\Core(G)$. Denote by $H_2$ the subgroup of $\Core(G)$ generated by $t_a$ for all $a\in \Z(G)$. Since  $\varphi(1)=1$ for every automorphism $\varphi\in H_1$, the intersection of $H_1$ and $H_2$ is trivial. For the automorphism $\varphi\in H_1$ and the automorphism $t_a\in H_2$ we have $\varphi t_a\varphi^{-1}=t_{\varphi(a)}$, therefore $H_2$ is the normal subgroup of the group $H$ generated by $H_1, H_2$. Hence $H$ is the semidirect product $H_2\rtimes H_1$ of $H_2=\Z(G)$ and $H_1={\rm Aut}(G)$.
\end{proof}

If $G$ is an abelian group, then the core quandle $\Core(G)$ is the Takasaki quandle $T(G)$ and Proposition \ref{coreaut} has strong modification which is proved in  \cite[Theorem 4.2]{BDS}.

\begin{theorem}\label{thm-dbs} If $G$ is an additive abelian group without $2$-torsion, then
\begin{enumerate}
\item $\Aut\left(T(G)\right)= G \rtimes \Aut(G)$,
\item $\Inn\left(T(G)\right)= 2G \rtimes \mathbb{Z}_2$.
\end{enumerate}
\end{theorem}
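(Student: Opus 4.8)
Write $G$ additively, so that the Takasaki operation is $x*y=2y-x$, the point symmetry about $x$ is $S_x\colon y\mapsto 2x-y$, and $t_a\colon x\mapsto x+a$ denotes translation by $a\in G$. Since $G$ is abelian we have $\Core(G)=T(G)$ and $\Z(G)=G$, so Proposition~\ref{coreaut} already exhibits an embedding $G\rtimes\Aut(G)\hookrightarrow\Aut(T(G))$ in which $\Aut(G)$ acts on the translation subgroup $\{t_a:a\in G\}\cong G$ in the natural way (indeed $\varphi t_a\varphi^{-1}=t_{\varphi(a)}$). Hence for part~(1) only the reverse inclusion remains, and my plan is: take an arbitrary $\psi\in\Aut(T(G))$, put $\varphi:=t_{-\psi(0)}\circ\psi$, which is an automorphism of $T(G)$ fixing $0$, and show $\varphi\in\Aut(G)$; this gives $\psi=t_{\psi(0)}\varphi\in G\rtimes\Aut(G)$.

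To prove $\varphi$ is additive I would extract three facts from the identities $u*0=-u$ and $0*v=2v$: using $\varphi(0)=0$ one gets $\varphi(-u)=-\varphi(u)$ and $\varphi(2v)=2\varphi(v)$; and evaluating $\varphi$ on the left-associated product $(u*v)*w=u+2(w-v)$ gives
\[
\varphi\bigl(u+2(w-v)\bigr)=\varphi(u)+2\bigl(\varphi(w)-\varphi(v)\bigr).
\]
Setting $u=0$ and combining with $\varphi\bigl(2(w-v)\bigr)=2\varphi(w-v)$ yields $2\varphi(w-v)=2\bigl(\varphi(w)-\varphi(v)\bigr)$. \emph{Here} the hypothesis enters: absence of $2$-torsion lets us cancel the factor $2$ and conclude $\varphi(w-v)=\varphi(w)-\varphi(v)$, whence $\varphi(w+v)=\varphi(w)+\varphi(v)$ by oddness. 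Thus $\varphi$ is an additive bijection, i.e.\ $\varphi\in\Aut(G)$, and part~(1) follows.

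For part~(2), the computation $S_xS_y(z)=2x-(2y-z)=z+2(x-y)$ shows $S_xS_y=t_{2(x-y)}$, so the translations $N:=\{t_g:g\in 2G\}\cong 2G$ all lie in $\Inn(T(G))$. Writing $S_x=t_{2x}S_0$ and reducing an arbitrary word in the $S_x$ (an even number of factors lands in $N$, an odd number in $NS_0$) gives $\Inn(T(G))=N\cup NS_0$. Then $N$ is normal of index $2$; $S_0$ is an involution with $S_0\notin N$ (if $S_0=t_g\in N$ then $g=S_0(0)=0$, forcing $S_0=\id$, contradicting that $G\neq 0$ has no $2$-torsion); and $S_0t_gS_0=t_{-g}$. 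Hence $\Inn(T(G))=N\rtimes\langle S_0\rangle\cong 2G\rtimes\mathbb{Z}_2$ with $\mathbb{Z}_2$ acting by inversion.

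The single genuine obstacle is the additivity step in part~(1): the operation of $T(G)$ records only differences lying in $2G$, so a naive manipulation establishes $\varphi$-additivity merely modulo $2G$, which is too weak when $2G\neq G$ (e.g.\ $G=\mathbb{Z}$); passing through $\varphi(2h)=2\varphi(h)$ converts the $2G$-ambiguity into a $2$-torsion ambiguity that the hypothesis annihilates. The hypothesis is genuinely necessary: already for $G=\mathbb{Z}_2^3$ the quandle $T(G)$ is the trivial quandle on $8$ points, so $\Aut(T(G))=\Sigma_8$ and $\Inn(T(G))=1$, neither of which agrees with the stated formulas. Part~(2) is essentially bookkeeping once $S_xS_y=t_{2(x-y)}$ is noticed.
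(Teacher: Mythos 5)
Your argument is correct and complete. Note, though, that the paper itself gives no proof of Theorem~\ref{thm-dbs}: it is imported verbatim from \cite[Theorem 4.2]{BDS}, so there is nothing internal to compare against except Proposition~\ref{coreaut}, which you correctly invoke for the containment $G\rtimes\Aut(G)\hookrightarrow\Aut(T(G))$ (with $\Z(G)=G$ since $G$ is abelian). The substance you supply --- normalizing an arbitrary $\psi\in\Aut(T(G))$ to $\varphi=t_{-\psi(0)}\psi$ fixing $0$, extracting $\varphi(-u)=-\varphi(u)$, $\varphi(2v)=2\varphi(v)$ and $\varphi(u+2(w-v))=\varphi(u)+2(\varphi(w)-\varphi(v))$ from the quandle identities, and then cancelling the factor $2$ using the torsion hypothesis --- is exactly the right mechanism, and your diagnosis that the operation only sees differences modulo $2G$ correctly isolates where ``no $2$-torsion'' is indispensable. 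Part (2) via $S_xS_y=t_{2(x-y)}$ and $S_x=t_{2x}S_0$ is likewise correct. The only caveat is degenerate and belongs to the statement rather than to your proof: for $G=0$ one has $S_0=\id$ and $\Inn(T(G))=1$, whereas $2G\rtimes\mathbb{Z}_2\cong\mathbb{Z}_2$, so part (2) implicitly assumes $G\neq 0$; you tacitly use this when arguing $S_0\notin N$, and it would be worth stating the exclusion explicitly.
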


It is obvious that if $G = \mathbb{Z}_2^n$ is an elementary abelian $2$-group, then the Takasaki quandle $T(G)$ is the trivial quandle on $2^n$ elements. The group of automorphisms of this quandle is isomorphic to $\Sigma_{2^n}$ and the group of inner automorphisms is trivial. Here we attempt to complement Theorem \ref{thm-dbs} considering finite abelian groups with $2$-torsion.

Let $M = (m_{ij})$ be a symmetric $n \times n$ matrix with entries from $\mathbb{N} \cup\{ \infty\}$ such that $m_{ii} = 1$ for all $i=1,\dots,n$ and $m_{ij} > 1$ for $i \neq j$. \textit{The Coxeter group $W(M)$ of type $M$} is the group which has the following presentation
$$W(M) =\{a_1,\dots , a_n~ |~ (a_i a_j)^{m_{ij}} = 1~\textrm{for all}~ 1 \leq i, j \le n\}.$$
More results about Coxeter groups can be found, for example, in \cite{Cox}.
\begin{theorem}\label{even}
Let $G$ be a finite additive abelian group with cyclic decomposition
$$G= \mathbb{Z}_{n_1} \oplus \cdots \oplus \mathbb{Z}_{n_r} \oplus \mathbb{Z}_{n_{r+1}} \oplus \cdots \oplus \mathbb{Z}_{n_k}$$ such that
$n_i$ divides $n_{i+1}$ for all $i=1,\dots k-1$ and $n_k>2$ is even and $N$ be the number
$$N=\begin{cases}
\prod_{j=1}^r n_j \cdot \prod_{i=r+1}^k n_i/2,& \mathbb{Z}_{n_r}~\textrm{is the largest odd order component},\\
\prod_{i=1}^k n_i/2  & \textrm{if there is no odd order component}.
\end{cases}$$
Then the group $\Inn\left(T(G) \right)$ is isomorphic to the Coxeter group $W(M)$, where $M = (m_{ij})$ is the symmetric $N \times N$ matrix with $m_{ij} =n_k/2$ for all $i \neq j$.
\end{theorem}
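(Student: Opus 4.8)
The plan is to compute $\Inn(T(G))$ explicitly as a generalized dihedral group and then to recognize that group as the Coxeter group $W(M)$. First I would pin down $\Inn(T(G))$. Writing $G$ additively, the operation of $T(G)$ is $x*y = 2y-x$, so each point symmetry $S_y\colon x\mapsto 2y-x$ is an involution, the composite $S_yS_z$ is the translation $t_{2(y-z)}\colon x\mapsto x+2(y-z)$, and $S_y = t_{2y}S_0$ with $S_0\colon x\mapsto -x$. Hence $\Inn(T(G)) = \langle\, t_v\ (v\in 2G),\ S_0\,\rangle$; the translations $t_v$ with $v\in 2G$ form a subgroup isomorphic to $2G$, and since $n_k>2$ the map $S_0$ is an involution that is not a translation and satisfies $S_0t_vS_0 = t_{-v}$, so $\Inn(T(G))\cong 2G\rtimes\langle S_0\rangle = \mathrm{Dih}(2G)$, the generalized dihedral group of $2G$, of order $2\lvert 2G\rvert$. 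This is the extension of Theorem~\ref{thm-dbs}(2) to the $2$-torsion case, where $2G\neq G$.

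Writing $A := 2G$, the assertion becomes $\mathrm{Dih}(A)\cong W(M)$, and the key structural fact I would isolate is that when $A$ is cyclic or of exponent $2$, the group $\mathrm{Dih}(A)$ is a Coxeter group of exactly the type in the statement: if $A\cong\mathbb{Z}_c$ then $\mathrm{Dih}(A)$ is the dihedral group $I_2(c)$, the Coxeter group on $N=2$ generators with $m_{12}=c=\exp(A)$, while if $A\cong\mathbb{Z}_2^d$ then $\mathrm{Dih}(A)=A\times\mathbb{Z}_2\cong\mathbb{Z}_2^{d+1}$, the Coxeter group on $N=d+1$ generators with all $m_{ij}=2$. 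For the isomorphism I would take as Coxeter generators the point symmetries $S_{y_1},\dots,S_{y_N}$ with $y_1=0$ and $2y_2,\dots,2y_N$ a minimal generating set of $A$: the assignment $a_i\mapsto S_{y_i}$ is onto $\Inn(T(G))$ because $S_{y_i}S_{y_1}=t_{2y_i}$ and $S_{y_1}=S_0$, and it respects the defining relations of $W(M)$ because each $S_{y_i}$ is an involution and each $S_{y_i}S_{y_j}$ (for $i\neq j$) is a translation by an element of $A$, hence of order dividing $\exp(A)=n_k/2$. Injectivity then follows by comparing orders, since these particular $W(M)$ are finite of order $2\lvert A\rvert = \lvert\Inn(T(G))\rvert$ (equivalently, from a normal form for $W(M)$, using that $2(y_i-y_j)$ has order exactly $n_k/2$ in the cyclic case).

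It remains to verify that, under the hypotheses, $2G$ really is cyclic or of exponent $2$, and to match the invariants. From $n_1\mid n_2\mid\dots\mid n_k$ with $n_k$ even one computes $2G\cong\bigoplus_{n_i>2}\mathbb{Z}_{n_i/\gcd(2,n_i)}$, so $\exp(2G)=n_k/\gcd(2,n_k)=n_k/2$, which matches the off-diagonal entries of $M$; and for the groups on the list $2G$ is cyclic precisely when at most one $n_i$ exceeds $2$, and elementary abelian of exponent $2$ precisely when $n_k=4$. In each case $\lvert 2G\rvert$ and the minimal number of generators of $2G$ produce the integer $N$ of the statement, completing the identification $\Inn(T(G))\cong W(M)$.

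The step I expect to be the main obstacle is precisely this structural dichotomy for $2G$: for a finite abelian $A$ that is neither cyclic nor of exponent $2$, the group $\mathrm{Dih}(A)$ is not a Coxeter group at all --- for instance $\mathrm{Dih}(\mathbb{Z}_3^2)$, of order $18$, is not, since the only finite Coxeter group of order $18$ is $I_2(9)$, which has an element of order $9$ --- so the argument genuinely relies on the hypotheses restricting $G$ to those whose $2G$ is of admissible shape. Carrying out that analysis and then converting the data $\{n_i\}$ into the exact value of $N$ is the substantive work; granted it, the remaining steps are essentially formal.
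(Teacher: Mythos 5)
Your reduction of $\Inn(T(G))$ to the generalized dihedral group $2G\rtimes\mathbb{Z}_2$ (the translations $t_v\colon x\mapsto x+v$ with $v\in 2G$, extended by $S_0\colon x\mapsto -x$, a group of order $2\lvert 2G\rvert$) is correct and is sharper than anything in the paper's own argument. But the programme of your final paragraph cannot be completed, and the obstacle you flag there is fatal to the statement itself, not merely to your route. First, the hypotheses do \emph{not} force $2G$ to be cyclic or of exponent $2$: take $G=\mathbb{Z}_3\oplus\mathbb{Z}_6$ (so $n_1=3$ divides $n_2=6$ and $n_2>2$ is even), for which $2G\cong\mathbb{Z}_3^2$ and $\Inn(T(G))\cong\mathbb{Z}_3^2\rtimes\mathbb{Z}_2$ has order $18$; by your own order-$18$ argument this is not a Coxeter group at all. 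Second, even when $2G$ is cyclic, your Coxeter rank ($1$ plus the minimal number of generators of $2G$) is not the theorem's $N$, which counts the distinct involutions $S_x$ and hence equals $\lvert 2G\rvert$. For $G=\mathbb{Z}_6$ the theorem asserts $\Inn(\R_6)\cong W(M)$ with $M$ the $3\times 3$ matrix with off-diagonal entries $3$; that Coxeter group is the infinite affine group of type $\tilde{A}_2$, whereas $\Inn(\R_6)\cong\Sigma_3$ has order $6$. Likewise in $\R_{12}$ the distinct generators $S_0$ and $S_3$ satisfy $S_0S_3=t_{-6}$ of order $2$, not $n_k/2=6$, so the products of distinct generators do not even have the orders a Coxeter presentation of the stated type would force.

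So the step you defer as ``the substantive work'' --- checking that $2G$ has admissible shape and that the invariants match $N$ --- is simply false under the stated hypotheses, and your approach, carried out honestly, refutes Theorem~\ref{even} (and with it Corollary~\ref{inn-abelian-dihedral} for $n\ge 3$ and Corollary~\ref{inn-aut-abelian} for $k\ge 2$: for $G=\mathbb{Z}_4^2$ one gets $\mathbb{Z}_2^3$ of order $8$, not $\mathbb{Z}_2^4$) rather than proving it. The paper's proof only verifies that the $N$ distinct generators $S_x$ are involutions satisfying $(S_xS_y)^{n_k/2}=\id$; this exhibits $\Inn(T(G))$ as a \emph{quotient} of $W(M)$, and the claim that these relations are defining is never justified. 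It cannot be, since $\lvert W(M)\rvert>2\lvert 2G\rvert$ whenever $N\ge 3$ (indeed $W(M)$ is infinite once $N\ge 3$ and $n_k/2\ge 3$). The correct conclusion your computation yields is $\Inn(T(G))\cong 2G\rtimes\mathbb{Z}_2$, which is a Coxeter group exactly in the cases you isolate --- $2G$ cyclic (rank $2$, label $\lvert 2G\rvert$) or elementary abelian of exponent $2$ (rank $d+1$, all labels $2$) --- and not of rank $N$ in general.
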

\begin{proof}
By the definition $\Inn\big(T(G) \big)= \langle S_x~|~x \in T(G) \rangle$ and it is easy to see that $S_x^2= \id_{T(G)}$ for all $x \in T(G)$. Note that the exponent of $G$ is $n_k>2$. For the elements $x, y \in T(G)$ using direct calculations we have
$$(S_xS_y)^{n_k/2}(z)=n_k(x-y)+z=z$$
 for all $z \in T(G)$. Therefore the group  $\Inn\big(T(G) \big)$ is generated by involutions $S_x$ subject to the relations $(S_xS_y)^{n_k/2}= \id_{T(G)}$ for all $x, y \in T(G)$. In order to determine distinct generators $S_x$ of $\Inn(G)$, notice that $S_x=S_y$ if and only if $2x=2y$. Let $x=(x_1,\dots, x_r, x_{r+1}, \dots, x_k)$ and $y=(y_1,\dots, y_r,y_{r+1},  \dots, y_k)$ be two elements of $G$. Then $2x=2y$ if and only if $x_j=y_j$ for all $j=1,\dots r$ and $x_i=y_i\pm n$ for all $i=r+1,\dots,k$. Thus the number $N$ of distinct involutions $S_x$ in $\Inn(G)$ is equal to $\prod_{j=1}^r n_j \cdot \prod_{i=r+1}^k n_i/2$ if $\mathbb{Z}_{n_r}$ is the largest odd order component in $G$ and $\prod_{i=1}^k n_i/2$ if there are no odd order components. This proves that $\Inn\big(T(G) \big)$ is isomorphic to $W(M)$, where $M = (m_{ij})$ is an $N \times N$ matrix with $m_{ij} =n_k/2$ for all $i \neq j$.
\end{proof}

For an arbitrary group $G$ a discription of the group of inner automorphisms $\Inn(\Core(G))$ as a quotient of some subgroup of the group $(G\times G)\rtimes {\mathbb{Z}}_2$ is presented in \cite[Proposition 4.14]{Nos}. Theorem \ref{even} and Theorem \ref{thm-dbs}(2) give alternative description of $\Inn(\Core(G))$ in the case when $G$ is an abelian group.

If $G=\mathbb{Z}_n$ is the cyclic group of order $n$, then the Takasaki quandle $T(G)$ is the dihedral quandle $\R_n$ and Theorem \ref{even} implies the following result.

\begin{corollary}\label{inn-abelian-dihedral}
Let $n>2$ be an integer. Then the group of inner automorphisms $\Inn(\R_{2n})$ of the dihedral quandle $\R_{2n}$ is isomorphic to the Coxeter group $W(M)$, where $M = (m_{ij})$ is the $n \times n$ matrix with $m_{ij} =n$ for all $i \neq j$.
\end{corollary}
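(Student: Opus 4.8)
The plan is simply to specialize Theorem \ref{even} to the cyclic group $G=\mathbb{Z}_{2n}$, for which $T(G)=\R_{2n}$ by definition. First I would record that the invariant-factor decomposition of $\mathbb{Z}_{2n}$ consists of the single summand $\mathbb{Z}_{2n}$ itself, so in the notation of Theorem \ref{even} we have $k=1$ and $n_k=n_1=2n$. Since $n>2$, the integer $2n$ is even and satisfies $2n>2$, so the hypothesis ``$n_k>2$ is even'' of Theorem \ref{even} is met and the theorem applies.

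Next I would determine which branch of the formula for $N$ is the relevant one. The unique invariant factor of $\mathbb{Z}_{2n}$ has the even order $2n$, so in its invariant-factor decomposition there is no odd-order cyclic component; hence we fall into the second case of the definition of $N$, which gives $N=\prod_{i=1}^{k}n_i/2=2n/2=n$. Finally, the off-diagonal entries are $m_{ij}=n_k/2=2n/2=n$ for all $i\neq j$. Plugging these values into Theorem \ref{even} yields $\Inn(\R_{2n})=\Inn\big(T(\mathbb{Z}_{2n})\big)\cong W(M)$, where $M=(m_{ij})$ is the $n\times n$ matrix with $m_{ij}=n$ for all $i\neq j$, which is exactly the claim.

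There is essentially no genuine obstacle here: the corollary is a direct instance of Theorem \ref{even}, and the proof is just bookkeeping. The one point worth being careful about is that the decomposition invoked must be the invariant-factor (not the primary) decomposition of $\mathbb{Z}_{2n}$; for instance when $n$ is odd one could be tempted to write $\mathbb{Z}_{2n}\cong\mathbb{Z}_{n}\oplus\mathbb{Z}_{2}$, which would spuriously produce an ``odd-order component'' $\mathbb{Z}_n$ and the wrong branch of the $N$-formula. Working with the single even cyclic factor $\mathbb{Z}_{2n}$ avoids this and gives $N=n$ as stated.
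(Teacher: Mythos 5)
Your proposal is correct and matches the paper's own (essentially one-line) justification: the corollary is stated as a direct specialization of Theorem \ref{even} to $G=\mathbb{Z}_{2n}$, whose invariant-factor decomposition is the single even factor $\mathbb{Z}_{2n}$, giving $N=n$ and $m_{ij}=n_k/2=n$. Your remark about using the invariant-factor rather than the primary decomposition is a sensible precaution, consistent with the divisibility hypothesis $n_i\mid n_{i+1}$ in the theorem.
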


\begin{corollary}\label{inn-aut-abelian}
If $G=\mathbb{Z}_4^k$, then the group of inner automorphisms $\Inn\big(T(G) \big)$ is the elementary abelian group $\mathbb{Z}_2^{2^k}.$
\end{corollary}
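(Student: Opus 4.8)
The plan is to derive Corollary \ref{inn-aut-abelian} as a direct special case of Theorem \ref{even}. We take $G=\mathbb{Z}_4^k$, which in the notation of Theorem \ref{even} means $k$ cyclic summands each of order $4$, so $n_1=\dots=n_k=4$, $n_k=4>2$ is even, and there is no odd order component. First I would extract the relevant numerical data: since there is no odd order component, $N=\prod_{i=1}^k n_i/2 = 4^k/2^k = 2^k$, and the exponent parameter is $n_k/2 = 2$. Thus Theorem \ref{even} tells us $\Inn(T(G)) \cong W(M)$, where $M$ is the $2^k\times 2^k$ symmetric matrix with $m_{ij}=2$ for all $i\neq j$ (and $m_{ii}=1$).

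The second step is to identify this Coxeter group explicitly. A Coxeter group $W(M)$ with $m_{ij}=2$ for all $i\neq j$ is generated by involutions $a_1,\dots,a_N$ subject to $(a_ia_j)^2=1$ for all $i\neq j$, i.e. all the generators commute pairwise; hence $W(M)$ is the elementary abelian $2$-group $\mathbb{Z}_2^N$. With $N=2^k$ this gives $\Inn(T(G))\cong\mathbb{Z}_2^{2^k}$, as claimed. I would spell out briefly that $(a_ia_j)^2=1$ together with $a_i^2=a_j^2=1$ forces $a_ia_j=a_ja_i$, so $W(M)$ is abelian of exponent $2$ with $N$ generators and therefore isomorphic to $\mathbb{Z}_2^N$.

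Since both steps are essentially unwinding of definitions, there is no real obstacle here; the only thing to be careful about is the bookkeeping of $N$ — making sure the formula $\prod_{i=1}^k n_i/2$ is read as $\left(\prod_{i=1}^k n_i\right)/2^k$ in the "all components even" regime the way the proof of Theorem \ref{even} uses it (each even component $\mathbb{Z}_{n_i}$ contributes a factor $n_i/2$), which indeed yields $2^k$ rather than $4^k/2$. With that convention the computation is immediate. Optionally I would remark that this also recovers, for $k=1$, the statement $\Inn(\R_4)\cong\mathbb{Z}_2^2$, consistent with $\R_4$ being the trivial quandle up to the action of its inner automorphism group — actually $\R_4$ is not trivial, but $\Inn(\R_4)$ is the Klein four-group, matching Corollary \ref{inn-abelian-dihedral} with $n=2$ degenerately or direct inspection.
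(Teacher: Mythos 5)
Your proof is correct and follows essentially the same route as the paper: the paper's own argument is the one-line observation that $N=2^k$ and $S_xS_y=S_yS_x$ for all $x,y\in T(G)$, which is exactly your specialization of Theorem \ref{even} with $m_{ij}=n_k/2=2$ forcing the Coxeter group to be $\mathbb{Z}_2^N$. Your reading of the formula for $N$ as the product of the factors $n_i/2$ (giving $4^k/2^k=2^k$) is the intended one, matching how the proof of Theorem \ref{even} counts the distinct involutions $S_x$ via the condition $2x=2y$.
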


\begin{proof}
In this case $N=2^k$ and $S_x S_y=S_y S_x$ for all $x, y \in T(G)$.
\end{proof}
\begin{corollary}\label{r4-auto}
The group $\Aut(\R_4)$  is isomorphic to the semidirect product $(\mathbb{Z}_2 \oplus \mathbb{Z}_2) \rtimes \mathbb{Z}_2$, where $\mathbb{Z}_2$ acts on $\mathbb{Z}_2\oplus\mathbb{Z}_2$ by permuting of the components.
\end{corollary}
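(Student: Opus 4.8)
The plan is to pin down $\Aut(\R_4)$ by combining the description of $\Inn(\R_4)$ coming from Corollary \ref{inn-aut-abelian} with a counting argument based on the orbit structure of $\R_4$, and then to recognise the resulting group of order $8$ as the asserted semidirect product.

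\textbf{Step 1 (setup).} Write $\R_4 = T(\mathbb{Z}_4) = \{a_0,a_1,a_2,a_3\}$ with $a_i * a_j = a_{2j-i\,(\mathrm{mod}\ 4)}$. A direct computation gives $S_{a_0} = S_{a_2} = (a_1\,a_3)$ and $S_{a_1} = S_{a_3} = (a_0\,a_2)$. Hence, as also follows from Corollary \ref{inn-aut-abelian} with $k=1$, we have $\Inn(\R_4) = \langle (a_0\,a_2),(a_1\,a_3)\rangle \cong \mathbb{Z}_2 \oplus \mathbb{Z}_2$, and $\R_4$ has exactly two orbits, $O_1 = \{a_0,a_2\}$ and $O_2 = \{a_1,a_3\}$.

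\textbf{Step 2 (upper bound).} Every automorphism of a quandle permutes its orbits, so any $\alpha \in \Aut(\R_4)$ either fixes both $O_1, O_2$ setwise or interchanges them. In either case $\alpha$ is determined by its restriction to $O_1$ (two possibilities) and to $O_2$ (two possibilities), whence $|\Aut(\R_4)| \le 8$.

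\textbf{Step 3 (lower bound and identification).} As for any quandle, $\alpha S_x \alpha^{-1} = S_{\alpha(x)}$, so $\Inn(\R_4)$ is a normal subgroup of $\Aut(\R_4)$. The reflection $\tau \colon x \mapsto 1 - x$ of $\mathbb{Z}_4$, that is $\tau = (a_0\,a_1)(a_2\,a_3)$, satisfies $\tau(x*y) = 1 - (2y-x) = 2(1-y)-(1-x) = \tau(x)*\tau(y)$, hence $\tau \in \Aut(\R_4)$ with $\tau^2 = \id$; since $\tau$ swaps $O_1$ and $O_2$ it is not inner. Therefore $\Inn(\R_4) \cup \Inn(\R_4)\tau$ already has $8$ elements, and together with Step 2 this gives $\Aut(\R_4) = \Inn(\R_4) \rtimes \langle \tau \rangle$. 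Finally $\tau S_{a_0}\tau^{-1} = S_{\tau(a_0)} = S_{a_1}$ and $\tau S_{a_1}\tau^{-1} = S_{a_0}$, so conjugation by $\tau$ interchanges the two $\mathbb{Z}_2$-factors $\langle S_{a_0}\rangle$ and $\langle S_{a_1}\rangle$ of $\Inn(\R_4)$; this is exactly the claimed isomorphism $\Aut(\R_4) \cong (\mathbb{Z}_2 \oplus \mathbb{Z}_2) \rtimes \mathbb{Z}_2$ with $\mathbb{Z}_2$ acting by permuting the components.

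\textbf{Main obstacle.} There is essentially no hard step here; the only point that deserves a little care is the orbit-counting bound of Step 2, where one must use that automorphisms preserve the orbit partition and that both orbits have size two, which caps the count at $4+4$. Everything else is a short verification (the formula for $S_{a_i}$ and the one-line check that $\tau$ respects the operation).
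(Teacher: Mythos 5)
Your proof is correct. It builds the same order-$8$ subgroup as the paper: $\Inn(\R_4)=\langle S_{a_0}\rangle\oplus\langle S_{a_1}\rangle$ extended by the orbit-swapping involution $(a_0\,a_1)(a_2\,a_3)$ --- your $\tau$ is exactly the paper's $\phi$, and your one-line verification that it respects $*$ via the affine formula $x\mapsto 1-x$ is a detail the paper leaves implicit. Where you genuinely diverge is in showing that nothing else is an automorphism. The paper writes $\Sigma_4=H\cup\sigma_1 H\cup\sigma_2 H$ and checks that the two coset representatives $\sigma_1,\sigma_2$ fail to be automorphisms, so that $\Aut(\R_4)\subseteq H$. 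You instead obtain the upper bound $|\Aut(\R_4)|\le 8$ directly from the fact that automorphisms permute the two orbits $\{a_0,a_2\}$ and $\{a_1,a_3\}$, which you correctly ground in the identity $\alpha S_x\alpha^{-1}=S_{\alpha(x)}$. Your counting argument is more structural and would transfer to other small quandles whose orbit decomposition is known, while the paper's coset check is more ad hoc but equally valid. Both arguments are complete; there is no gap in yours.
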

\begin{proof}
The quandle $\R_4$ has $4$ elements $a_0, a_1, a_2, a_3$. From Corollary \ref{inn-aut-abelian} the group $\Inn(\R_4 )$ is isomorphic to the group $\mathbb{Z}_2 \oplus \mathbb{Z}_2= \langle S_{a_0} \rangle  \oplus \langle S_{a_1} \rangle$. Let $\phi$ be the following automorphism of $\R_4$
$$
\phi:\begin{cases}
a_0 \mapsto a_1, &  \\
a_1 \mapsto a_0, &  \\
a_2 \mapsto a_3, &  \\
a_3 \mapsto a_2. &
\end{cases} 
$$
It is easy to see that $\phi$ is not  an inner automorphism of $\R_4$ and is of order $2$. Moreover, $\phi S_{a_0} \phi^{-1}=S_{a_1}$ and
$\phi S_{a_1} \phi^{-1}=S_{a_0}$. Therefore the group $H=\left( \langle S_{a_0} \rangle  \oplus \langle S_{a_1} \rangle \right)\rtimes  \langle\phi \rangle$ is a subgroup of $\Aut(\R_4)$. For the symmetric group $\Sigma_4$ we can write $\Sigma_4= H \cup \sigma_1H \cup \sigma_2H$, where
$$
\sigma_1: \begin{cases}
a_0 \mapsto a_1, &  \\
a_1 \mapsto a_0, &  \\
a_2 \mapsto a_2, &  \\
a_3 \mapsto a_3, &
\end{cases}
\sigma_2:\begin{cases}
a_0 \mapsto a_3, &  \\
a_1 \mapsto a_1, &  \\
a_2 \mapsto a_2, &  \\
a_3 \mapsto a_0. &
\end{cases}
$$
Since $\sigma_1, \sigma_2 \not\in \Aut(\R_4)$, we have $\Aut(\R_4) \cong H \cong  (\mathbb{Z}_2 \oplus \mathbb{Z}_2) \rtimes \mathbb{Z}_2$.
\end{proof}

\begin{remark}
The dihedral quandle $\R_{2n}=\{a_0,a_1,\dots,a_{2n-1}\}$ is known to be disconnected with two orbits  $\{a_0, a_2, \dots, a_{2n-2}\}$ and $\{a_1, a_3, \dots, a_{2n-1}\}$. Define the map $\phi: \R_{2n} \to \R_{2n}$ by the rule $\phi(a_i)=a_{i+1}$, $\phi(a_{i+1})=a_i$ for all $i=0,2,\dots, 2n-2$. Direct calculations show that the map $\phi$ is an automorphism of $\R_{2n}$ if and only if $n=1$ or $n=2$. Therefore the arguments from the proof of Proposition \ref{r4-auto} do not work in the case of quandle $\R_{2n}$ for $n >2$. This observation also shows that a bijection of a quandle which permutes the orbits of the quandle is not neccessarily an automorphism.
\end{remark}\bigskip
\section{Quandles with $k$-transitive action of the automorphism group}\label{sec6}
Let a group $G$ acts on a sex $X$ from the left, i.~e. there exists a homomorphism $G\to \Sigma_{|X|}$ which maps an element $g\in G$ to the permutation $x\mapsto gx$ of the set $X$. For the number $k\leq|X|$ we say that $G$ acts $k$-transitively on $X$ if for each pair of $k$-tuples $(x_1,\dots,x_k)$ and $(y_1,\dots,y_k)$ of distinct elements from $X$ there exists an element $g\in G$ such that $gx_i = y_i$ for $i=1,\dots,k$. For the sake of simplicity, if $k>|X|$, then we say that $G$ acts $k$-transitively on $X$ independently on $G$.

If $Q$ is a quandle, then the group of inner automorphisms $\Inn(Q)$ acts naturally on $Q$. If the group $\Inn(Q)$ acts $k$-transitively on $Q$, then we say that the quandle $Q$ is $k$-transitive. In particular, the quandle $Q$ is $1$-transitive if and only if it is connected. McCarron  proved that if $Q$ is a finite $2\leq k$-transitive quandle with at least four elements, then $k = 2$ \cite[Proposition 5]{McC}. Moreover, the dihedral quandle $\R_3$ on $3$ elements is the only $3$-transitive quandle. Therefore higher order transitivity does not exist in quandles with at least four elements. 
 
We investigate the following problem from \cite[Problem 6.7]{BDS}: For an integer $k\geq 2$ classify all finite quandles $Q$ for which $\Aut(Q)$ acts $k$-transitively on $Q$.
\begin{lemma}\label{center}
Let $Q$ be a quandle and $\varphi$ be an automorphism of  $Q$. Then $\varphi\left(\Z(Q)\right)=\Z(Q)$.
\end{lemma}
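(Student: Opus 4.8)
The plan is to verify directly that $\varphi$ maps $\Z(Q)$ into itself and then apply the same observation to $\varphi^{-1}$ to get equality. Recall that $\Z(Q) = \{x \in Q \mid x*y = x \text{ for all } y \in Q\}$, so the statement to establish is that $\varphi(x) \in \Z(Q)$ whenever $x \in \Z(Q)$.

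First I would fix $x \in \Z(Q)$ and an arbitrary element $z \in Q$, and use the surjectivity of $\varphi$ to write $z = \varphi(y)$ for some $y \in Q$. Then the automorphism property gives $\varphi(x)*z = \varphi(x)*\varphi(y) = \varphi(x*y)$, and since $x \in \Z(Q)$ we have $x*y = x$, whence $\varphi(x)*z = \varphi(x)$. As $z$ was arbitrary, this shows $\varphi(x) \in \Z(Q)$, i.e.\ $\varphi(\Z(Q)) \subseteq \Z(Q)$.

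Finally, since $\varphi^{-1}$ is again an automorphism of $Q$, the same argument yields $\varphi^{-1}(\Z(Q)) \subseteq \Z(Q)$, and applying $\varphi$ to both sides gives $\Z(Q) \subseteq \varphi(\Z(Q))$. Combining the two inclusions, $\varphi(\Z(Q)) = \Z(Q)$. There is essentially no obstacle here: the only point requiring (mild) care is to invoke surjectivity of $\varphi$ so that ranging over $z \in Q$ is the same as ranging over $\varphi(y)$ with $y \in Q$, and to remember that $\varphi^{-1} \in \Aut(Q)$ so that the reverse inclusion comes for free rather than needing a separate injectivity argument.
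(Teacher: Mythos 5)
Your proof is correct and follows essentially the same argument as the paper: the forward inclusion via the automorphism property (the paper leaves the surjectivity step implicit, which you rightly make explicit), and the reverse inclusion obtained by symmetry, where the paper argues directly with injectivity instead of invoking $\varphi^{-1}$. These are cosmetic differences only.
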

\begin{proof}
If $x\in \Z(Q)$, then for all $y\in Q$ we have $\varphi(x)*\varphi(y)=\varphi(x*y)=\varphi(x)$, therefore $\varphi(x)\in \Z(Q)$. Conversely, if $\varphi(x)\in \Z(Q)$, then for all $y\in Q$ we have $\varphi(x*y)=\varphi(x)*{\varphi(y)}=\varphi(x)$. Therefore $x*y=x$ for all $y\in Q$ and $x\in \Z(Q)$.
\end{proof}
\begin{theorem}\label{tran}
Let $Q$ be a finite quandle. Then the following statements are equivalent:
\begin{enumerate}
\item $Q$ is either trivial quandle or $Q=\R_3$.
\item $\Aut(Q)=\Sigma_{|Q|}$. 
\item $\Aut(Q)$ acts $3$-transitively on $Q$.
\end{enumerate}
\end{theorem}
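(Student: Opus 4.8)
The plan is to establish the cycle of implications $(1)\Rightarrow(2)\Rightarrow(3)\Rightarrow(1)$. For $(1)\Rightarrow(2)$: if $Q$ is the trivial quandle then $\Aut(Q)=\Sigma_{|Q|}$ by the observation recorded in Section~\ref{sec2}; if $Q=\R_3=\{a_0,a_1,a_2\}$, I would note that $2j-i\equiv-(i+j)\pmod 3$, so for $i\neq j$ the product $a_i*a_j$ is precisely the unique element of $\{a_0,a_1,a_2\}$ distinct from $a_i$ and $a_j$, and this description of the operation is invariant under every permutation of the three elements, whence $\Aut(\R_3)=\Sigma_3$. For $(2)\Rightarrow(3)$: if $|Q|\leq 2$ then $3$-transitivity holds by the convention on $k$-transitivity adopted in Section~\ref{sec6}, and if $|Q|\geq 3$ then $\Sigma_{|Q|}$ is $|Q|$-transitive, hence in particular $3$-transitive.

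The content lies in $(3)\Rightarrow(1)$, which I would prove directly. If $x*y=x$ for all $x,y\in Q$ then $Q$ is the trivial quandle and we are in case (1); so assume there exist $x_0\neq y_0$ with $z_0:=x_0*y_0\neq x_0$. Since $S_{y_0}$ is injective and $S_{y_0}(y_0)=y_0$, also $z_0\neq y_0$, so $x_0,y_0,z_0$ are pairwise distinct and $|Q|\geq 3$. For an arbitrary $w\in Q\setminus\{x_0,y_0\}$ the triples $(x_0,y_0,z_0)$ and $(x_0,y_0,w)$ consist of distinct elements, so $3$-transitivity supplies $\varphi\in\Aut(Q)$ with $\varphi(x_0)=x_0$, $\varphi(y_0)=y_0$ and $\varphi(z_0)=w$; then $w=\varphi(z_0)=\varphi(x_0*y_0)=\varphi(x_0)*\varphi(y_0)=x_0*y_0=z_0$. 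Hence $Q\setminus\{x_0,y_0\}=\{z_0\}$ and $|Q|=3$.

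It remains to pin down the structure when $|Q|=3$. Then $3$-transitivity forces $\Aut(Q)=\Sigma_3$, so for any distinct $x,y$ there is $\varphi\in\Aut(Q)$ with $\varphi(x_0)=x$, $\varphi(y_0)=y$, and $x*y=\varphi(z_0)$ is different from $x$ (as $\varphi$ is injective and $z_0\neq x_0$) and from $y$ (as $S_y$ is injective and $S_y(y)=y$). Thus $x*y$ is the third element of $Q$ for every pair of distinct $x,y$, which is exactly the multiplication of $\R_3$, so $Q=\R_3$. The only genuinely delicate point is this step $(3)\Rightarrow(1)$, and within it the short reduction to $|Q|=3$; beyond carefully handling the degenerate cases $|Q|\leq 2$ (every quandle on at most two elements is trivial) and keeping the key step dependent on nothing more than $3$-transitivity and the quandle axioms, I anticipate no real obstacle. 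One could alternatively begin with Lemma~\ref{center}, noting that $\Z(Q)$ is $\Aut(Q)$-invariant so that transitivity alone forces $\Z(Q)=\varnothing$ or $\Z(Q)=Q$ (the latter meaning $Q$ is trivial), but this is not needed for the route above.
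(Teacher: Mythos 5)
Your proposal is correct and follows essentially the same route as the paper: the decisive step in both is to use $3$-transitivity with an automorphism fixing $x_0,y_0$ and moving $z_0=x_0*y_0$ to force $|Q|=3$, and then to read off the $\R_3$ multiplication table from transitivity. The only (harmless) difference is that you split on ``trivial versus non-trivial'' directly and so bypass Lemma~\ref{center}, whereas the paper first disposes of the case $\Z(Q)\neq\varnothing$ via that lemma.
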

\begin{proof}The implications $(1)\Rightarrow(2)$ and $(2)\Rightarrow(3)$ are obvious and the only thing which we need to prove is that the quandle $Q$ with $3$-transitive action of ${\rm Aut}(Q)$ is either trivial or is isomorphic to the dihedral quandle $\R_3$.

If $\Z(Q)\neq\varnothing$, then since ${\rm Aut}(Q)$ acts $3$-transitively (and therefore $1$-transitively) on $Q$ by Lemma \ref{center} the quandle $Q$ is trivial and the statement is proved. 

If $\Z(Q)=\varnothing$, then $|Q|\geq3$ and for an element $x\in Q$ there exists an element $y\in Q$ such that $z=x*y\neq x$. Since $x\neq y$, we have $z=x*y\neq y*y=y$, therefore $x,y,z$ are three distinct elements. Let $t$ be an arbitrary element of $Q$. If $t\notin\{x,y\}$, then since ${\rm Aut}(Q)$ acts $3$-transitively on $Q$ there exists an automorphism $\varphi$ of $Q$ such that $\varphi(x)=x$, $\varphi(y)=y$ and $\varphi(z)=t$, but $\varphi(z)=\varphi(x*y)=\varphi(x)*\varphi(y)=x*y=z$. Therefore  $Q=\{x,y,z\}$ has only three elements.

Since $\Aut (Q)$ acts $3$-transitively on $Q$, there exists an automorphism $\varphi$ such that $\varphi(x)=y$, $\varphi(y)=x$, $\varphi(z)=z$, therefore $y*x=\varphi(x)*{\varphi(y)}=\varphi(x*y)=\varphi(z)=z=x*y$.
Again since $\Aut (Q)$ acts $3$-transitively on $Q$, there exists an automorphism $\psi$ such that $\psi(x)=z$, $\psi(y)=x$, $\psi(z)=y$ and therefore $y=\psi(z)=\psi(x*y)=\psi(x)*{\psi(y)}=z*x$. Repeating this argument for all the possible triples $(a_1,a_2,a_3)$, where $a_i\in\{x,y,z\}$, we have the following relation in $Q$
$$x*y=y*x=z,~x*z=z*x=y,~y*z=z*y=x,$$
i.~e. $Q=\{x,y,z\}$ is the dihedral quandle $\R_3$.
\end{proof}

\begin{corollary}If $\Aut(Q)$ acts $3$-transitively on $Q$, then  for all $k\geq3$ it acts $k$-transitively on $Q$.
\end{corollary}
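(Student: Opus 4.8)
The plan is to read this off directly from Theorem \ref{tran}, which has already done all the work. First I would note that the hypothesis ``$\Aut(Q)$ acts $3$-transitively on $Q$'' is exactly statement $(3)$ of Theorem \ref{tran}. By the equivalence $(3)\Leftrightarrow(2)\Leftrightarrow(1)$ proved there, $Q$ is then either a trivial quandle or the dihedral quandle $\R_3$, and in either case $\Aut(Q)=\Sigma_{|Q|}$ is the full symmetric group on the underlying set.

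Next I would split according to the size of $k$ relative to $|Q|$. If $k\le|Q|$, then $\Sigma_{|Q|}$ trivially acts $k$-transitively on $Q$: given two $k$-tuples of distinct elements, the partial bijection matching them extends to a permutation of $Q$, i.e.\ to an element of $\Sigma_{|Q|}=\Aut(Q)$. If $k>|Q|$, then by the convention fixed at the start of Section \ref{sec6} every group (in particular $\Aut(Q)$) is by definition said to act $k$-transitively on $Q$, so there is nothing to check. Since $k\ge 3$ was arbitrary, this covers all cases.

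I do not expect a genuine obstacle here; the statement is a formal consequence of the classification in Theorem \ref{tran} together with the convention for $k>|Q|$. The only point that needs a word of care is precisely that convention, which becomes relevant when $Q=\R_3$ and $k\ge 4$: once it is invoked, the $k$-transitivity of $\Sigma_{|Q|}$ in the range $k\le|Q|$ and the convention in the range $k>|Q|$ together finish the proof.
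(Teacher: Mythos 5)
Your proof is correct and follows exactly the route the paper intends: the corollary is an immediate consequence of the equivalence $(3)\Leftrightarrow(2)$ in Theorem \ref{tran} (so $\Aut(Q)=\Sigma_{|Q|}$), combined with the $k$-transitivity of the full symmetric group for $k\le|Q|$ and the convention for $k>|Q|$ fixed at the start of Section \ref{sec6}. The paper gives no separate proof, treating it as immediate, and your write-up supplies precisely the missing details.
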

The following question for $k=2$ remains open.
\begin{question}Classify all finite quandles $Q$ with $2$-transitive action of $\Aut(Q)$.
\end{question}\bigskip

\section{Automorphisms of  extensions of quandles}\label{sec7}
Let $Q$ be a quandle and $S$ be a set.  A map
$\alpha : Q\times Q \to \Sigma_{|S|}$ is called {\it a constant quandle cocycle} if it satisfies two conditions:
\begin{enumerate}
\item $\alpha(x*y,z)\alpha(x,y) = \alpha(x*z,y*z)\alpha(x,z)$ for all $x, y, z \in Q$,
\item $\alpha(x,x) = \id_S$ for all $x \in Q$.
\end{enumerate}
Examples of constant quandle cocycles can be found, for example, in \cite{Andruskiewitsch}. The set of all constant quandle cocycles $\alpha : Q\times Q \to \Sigma_{|S|}$ is denoted by $\mathcal{C}^2(Q,S)$. For a quandle $Q$, a set $S$ and a constant quandle cocycle $\alpha:Q\times Q\to\Sigma_{|S|}$ consider the set $Q\times S=\{(x,t)~|~x\in Q,t\in S\}$ with the operation $*$ given by the rule
\begin{equation}\label{non-abelian-structure}
(x, t) * (y, s) = \left(x * y, \alpha(x,y)(t)\right)
\end{equation}
for $x, y \in Q$ and $s, t \in S$. The set $Q\times S$ with the operation $*$ is a quandle which is called {\it the non-abelian extension of $Q$ by $S$ with respect to $\alpha$} and is denoted by $Q \times_\alpha S$. Such quandles were introduced in \cite{Andruskiewitsch}. There exists a surjective quandle homomorphism from $Q \times_\alpha S$ onto $Q$, namely, projection onto 
the first component. On the other hand, for each $x \in Q$, the set $\{ (x,t)~|~t \in S \}$ is a trivial subquandle of $Q \times_\alpha S$. 

Two constant quandle cocycles $\alpha, \beta\in \mathcal{C}^2(Q,S)$ are said to be {\it cohomologous} if there exists a map $\lambda:Q \to \Sigma_{|S|}$ such that $\alpha(x,y)=\lambda(x*y)^{-1} \beta(x,y)\lambda(x)$ for all $x,y \in Q$. The relation of being cohomologous is an equivalence relation on $\mathcal{C}^2(Q,S)$. The equivalence class of a constant quandle cocycle $\alpha$ is called \textit{the cohomology class of $\alpha$} and is denoted by $[\alpha]$. The symbol $\mathcal{H}^2(Q,S)$ denotes the set of cohomology classes of constant quandle cocycles. The following result generalizes the result from \cite[Lemma 4.8]{Carter4} formulated for Alexander quandles.

\begin{lemma}\label{cohomologous-isomorphic}Let $Q$ be a quandle and $S$ be a set. If constant quandle cocycles  $\alpha, \beta:Q\times Q\to \Sigma_{|S|}$ are cohomologous, then the quandles $Q \times_\alpha S$ and $Q \times_\beta S$ are isomorphic.
\end{lemma}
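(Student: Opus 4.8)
The plan is to construct an explicit isomorphism $F\colon Q\times_\alpha S\to Q\times_\beta S$ out of the map $\lambda\colon Q\to\Sigma_{|S|}$ witnessing that $\alpha$ and $\beta$ are cohomologous. Since $\alpha(x,y)=\lambda(x*y)^{-1}\beta(x,y)\lambda(x)$ for all $x,y\in Q$, the natural candidate is
\[
F(x,t)=\bigl(x,\lambda(x)(t)\bigr)\qquad\text{for }x\in Q,\ t\in S.
\]
First I would check that $F$ is a bijection: it covers the first coordinate identically, and on each fiber $\{x\}\times S$ it acts by the permutation $\lambda(x)\in\Sigma_{|S|}$, so it is a bijection with inverse $(x,s)\mapsto(x,\lambda(x)^{-1}(s))$.

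Next I would verify that $F$ is a quandle homomorphism, i.e. $F\bigl((x,t)*(y,s)\bigr)=F(x,t)*F(y,s)$, where the left operation is that of $Q\times_\alpha S$ and the right one that of $Q\times_\beta S$. Unravelling the left side using \eqref{non-abelian-structure},
\[
F\bigl((x,t)*(y,s)\bigr)=F\bigl(x*y,\alpha(x,y)(t)\bigr)=\bigl(x*y,\ \lambda(x*y)\alpha(x,y)(t)\bigr),
\]
while the right side is
\[
F(x,t)*F(y,s)=\bigl(x,\lambda(x)(t)\bigr)*\bigl(y,\lambda(y)(s)\bigr)=\bigl(x*y,\ \beta(x,y)\lambda(x)(t)\bigr).
\]
The two first coordinates agree, and the second coordinates agree precisely because the cohomologous condition gives $\lambda(x*y)\alpha(x,y)=\beta(x,y)\lambda(x)$ as elements of $\Sigma_{|S|}$. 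Since $F$ is a bijective quandle homomorphism, it is an isomorphism, which is what we want.

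There is essentially no obstacle here; the only point requiring a modicum of care is bookkeeping the order of composition in $\Sigma_{|S|}$ consistently with the convention used in the definitions of constant quandle cocycle and of being cohomologous, so that the identity $\lambda(x*y)\alpha(x,y)=\beta(x,y)\lambda(x)$ falls out cleanly. One does not even need cocycle condition (1) for this lemma — it is needed only to know that $Q\times_\alpha S$ and $Q\times_\beta S$ are genuinely quandles, which is already part of the setup.
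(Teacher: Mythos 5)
Your proposal is correct and coincides with the paper's own proof: the paper uses exactly the same map $(x,t)\mapsto(x,\lambda(x)(t))$ and verifies the homomorphism property by the same computation with $\lambda(x*y)\alpha(x,y)=\beta(x,y)\lambda(x)$. No further comment is needed.
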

\begin{proof} Denote by $*$ the operation in the quandle $Q\times_{\alpha}S$ and by $\circ$ the operation in the quandle $Q\times_{\beta}S$. Since constant quandle cocycles  $\alpha$ and  $\beta$ are cohomologous, there exists a map $\lambda:Q \to \Sigma_{|S|}$ such that $\alpha(x,y)=\lambda(x*y)^{-1} \beta(x,y)\lambda(x)$ for all $x,y \in Q$. Denote by $f$ the map $f: X \times_\alpha S \to X \times_\beta S$ which maps the pair $(x,t)$ to the pair $\left(x, \lambda(x)(t)\right)$. The map $f$ is obviously a bijection between $Q \times_\alpha S$ and $Q \times_\beta S$. Further, for $(x,t), (y,s) \in Q \times_\alpha S$ we have
\begin{align}
\notag f \left((x,t)*(y,s) \right) &= f\left(x*y, \alpha(x,y)(t)\right) =  \left(x*y, \lambda(x*y) \alpha(x,y)(t)\right)\\
\notag&  =  \left(x*y, \beta(x,y)\lambda(x)(t) \right)  =  \left(x,\lambda(x)(t)\right)\circ \left(y,\lambda(y)(s)\right)\\
\notag&  =  f (x,t)\circ f (y,s),
\end{align}
i.~e. $f$ is an isomorphism of quandles.
\end{proof}

\begin{lemma}\label{aut-constant-action}
Let $Q$ be a quandle, $S$ be a set and $\alpha$ be a constant quandle cocycle. Then for $\phi\in \Aut(Q)$ and $\theta\in \Sigma_{|S|}$ the map $^{(\phi, \theta)}:\mathcal{C}^2(Q,S)\to\mathcal{C}^2(Q,S)$ defined by the rule $$^{(\phi,\theta)}\alpha(x,y)=\theta  \alpha \big(\phi^{-1}(x), \phi^{-1}(y) \big) \theta^{-1} $$
gives the left action of $\Aut(Q) \times \Sigma_{|S|}$ on $\mathcal{C}^2(Q,S)$ which induces an action on $\mathcal{H}^2(Q,S)$.
\end{lemma}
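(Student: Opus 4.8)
The plan is to check, in order, three things: that ${}^{(\phi,\theta)}\alpha$ is again a constant quandle cocycle; that $(\phi,\theta)\mapsto {}^{(\phi,\theta)}(-)$ defines a left action of $\Aut(Q)\times\Sigma_{|S|}$ on $\mathcal{C}^2(Q,S)$; and that this action preserves the relation of being cohomologous, so that it descends to an action on $\mathcal{H}^2(Q,S)$.

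First I would verify ${}^{(\phi,\theta)}\alpha\in\mathcal{C}^2(Q,S)$. Condition (2) is immediate: ${}^{(\phi,\theta)}\alpha(x,x)=\theta\,\alpha(\phi^{-1}(x),\phi^{-1}(x))\,\theta^{-1}=\theta\,\id_S\,\theta^{-1}=\id_S$. For condition (1), I would set $a=\phi^{-1}(x)$, $b=\phi^{-1}(y)$, $c=\phi^{-1}(z)$ and use that $\phi$ is a quandle automorphism, so $\phi^{-1}(u*v)=\phi^{-1}(u)*\phi^{-1}(v)$ for all $u,v\in Q$. Then the left side ${}^{(\phi,\theta)}\alpha(x*y,z)\,{}^{(\phi,\theta)}\alpha(x,y)$, after the inner $\theta^{-1}\theta$ cancels, collapses to $\theta\big(\alpha(a*b,c)\alpha(a,b)\big)\theta^{-1}$, and likewise the right side becomes $\theta\big(\alpha(a*c,b*c)\alpha(a,c)\big)\theta^{-1}$; these agree because $\alpha$ satisfies the cocycle identity at the triple $(a,b,c)$.

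Next I would check the action axioms. For the identity, ${}^{(\id_Q,\id_S)}\alpha=\alpha$ directly from the formula. For composition, expanding gives ${}^{(\phi_1,\theta_1)}\big({}^{(\phi_2,\theta_2)}\alpha\big)(x,y)=\theta_1\theta_2\,\alpha\big(\phi_2^{-1}\phi_1^{-1}(x),\phi_2^{-1}\phi_1^{-1}(y)\big)\theta_2^{-1}\theta_1^{-1}$, which equals ${}^{(\phi_1\phi_2,\theta_1\theta_2)}\alpha(x,y)$ since $\phi_2^{-1}\phi_1^{-1}=(\phi_1\phi_2)^{-1}$ and $\theta_2^{-1}\theta_1^{-1}=(\theta_1\theta_2)^{-1}$. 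Hence we obtain a left action on $\mathcal{C}^2(Q,S)$.

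Finally, for the descent to $\mathcal{H}^2(Q,S)$, I would start from $\alpha(x,y)=\lambda(x*y)^{-1}\beta(x,y)\lambda(x)$ with $\lambda:Q\to\Sigma_{|S|}$, substitute into the definition of ${}^{(\phi,\theta)}\alpha$, and again use $\phi^{-1}(x*y)=\phi^{-1}(x)*\phi^{-1}(y)$ to rewrite the result as $\mu(x*y)^{-1}\,{}^{(\phi,\theta)}\beta(x,y)\,\mu(x)$, where $\mu:Q\to\Sigma_{|S|}$ is given by $\mu(x)=\theta\,\lambda(\phi^{-1}(x))\,\theta^{-1}$; thus ${}^{(\phi,\theta)}\alpha$ and ${}^{(\phi,\theta)}\beta$ are cohomologous and the action is well defined on cohomology classes. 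The whole argument is routine bookkeeping; the only point that genuinely requires attention — the ``main obstacle'', such as it is — is to make sure the hypothesis that $\phi$ lies in $\Aut(Q)$ (rather than being merely a bijection of $Q$) is actually used, since it is precisely the compatibility $\phi^{-1}(u*v)=\phi^{-1}(u)*\phi^{-1}(v)$ that makes both the cocycle condition (1) and the cohomologous relation stable under the action.
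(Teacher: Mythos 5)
Your proof is correct and follows essentially the same route as the paper: the paper likewise dispatches the cocycle condition and the action axioms as a direct calculation, and handles the descent to $\mathcal{H}^2(Q,S)$ with exactly your $\mu(x)=\theta\,\lambda(\phi^{-1}(x))\,\theta^{-1}$ (called $\lambda'$ there). Your explicit verification of the first two parts, using $\phi^{-1}(u*v)=\phi^{-1}(u)*\phi^{-1}(v)$, is just the filled-in version of what the paper leaves implicit.
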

\begin{proof} Using direct calculation it is easy to check that $^{(\phi,\theta)}\alpha$ is a constant quandle cocycle and the map $^{(\phi, \theta)}:\mathcal{C}^2(Q,S)\to\mathcal{C}^2(Q,S)$ is the left action of $\Aut(Q)\times\Sigma_{|S|}$ on $\mathcal{C}^2(Q,S)$. If $\alpha,\beta$ are cohomologous constant cocycles, then there exists a map $\lambda:Q \to \Sigma_{|S|}$ such that $\alpha(x,y)= \lambda(x*y)^{-1} \beta(x,y)\lambda(x)$ for all $x, y \in Q$. Acting on this equality by the map $^{(\phi,\theta)}$ we have
$$^{(\phi,\theta)}\alpha(x,y)= {\lambda^{\prime}(x*y)^{-1}} ~^{(\phi,\theta)}\beta(x,y)\lambda^{\prime}(x),$$ 
where the map $\lambda^{\prime}:Q \to\Sigma_{|S|}$ is given by $\lambda'(x)=\theta \lambda(\phi^{-1}(x) )\theta^{-1}$. Therefore $^{(\phi,\theta)}\alpha$ and $^{(\phi,\theta)}\beta$ are cohomologous and the map $(\phi,\theta)$ which maps the class $[\alpha]$ to the class $\left[^{(\phi,\theta)}\alpha\right]$ is a well defined map $\mathcal{H}^2(Q,S)\to\mathcal{H}^2(Q,S)$. 
\end{proof}
If a group $G$ acts on a set $X$ from the left and $x \in X$, then the set $G_x=\{g \in G~|~gx=x \}$ forms a subgroup of $G$ which is called \textit{the stabilizer of $x$}.
By Lemma \ref{aut-constant-action} the group ${\rm Aut}(Q)\times\Sigma_{|S|}$ acts on the set $\mathcal{C}^2(Q,S)$ of all constant quandle cocycles. The following result gives a relation between groups ${\rm Aut}(Q)\times\Sigma_{|S|}$ and ${\rm Aut}(Q\times_{\alpha}S)$.

\begin{theorem}\label{exterrr}
Let $Q$ be a quandle, $S$ be a set and $\alpha:Q\times Q\to \Sigma_{|S|}$ be a constant quandle cocycle. Then there exists an embedding $({\rm Aut}(Q)\times \Sigma_{|S|})_{\alpha}\hookrightarrow {\rm Aut}(Q\times_{\alpha}S)$.
\end{theorem}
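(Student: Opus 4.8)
The plan is to construct an explicit homomorphism from the stabilizer $(\Aut(Q)\times\Sigma_{|S|})_\alpha$ into $\Aut(Q\times_\alpha S)$ and show it is injective. Fix a pair $(\phi,\theta)\in\Aut(Q)\times\Sigma_{|S|}$ that stabilizes $\alpha$ under the action of Lemma~\ref{aut-constant-action}, i.e.\ satisfies $\theta\,\alpha(\phi^{-1}(x),\phi^{-1}(y))\,\theta^{-1}=\alpha(x,y)$ for all $x,y\in Q$, equivalently $\theta\,\alpha(x,y)=\alpha(\phi(x),\phi(y))\,\theta$. To such a pair I would associate the map $F_{(\phi,\theta)}:Q\times_\alpha S\to Q\times_\alpha S$ defined by $F_{(\phi,\theta)}(x,t)=(\phi(x),\theta(t))$. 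This is visibly a bijection of the underlying set, with inverse $F_{(\phi^{-1},\theta^{-1})}$.

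The first key step is to verify that $F_{(\phi,\theta)}$ is a quandle automorphism of $Q\times_\alpha S$. Using the extension operation \eqref{non-abelian-structure}, we compute
\begin{align}
\notag F_{(\phi,\theta)}\big((x,t)*(y,s)\big) &= F_{(\phi,\theta)}\big(x*y,\ \alpha(x,y)(t)\big)
 = \big(\phi(x*y),\ \theta\alpha(x,y)(t)\big)\\
\notag &= \big(\phi(x)*\phi(y),\ \alpha(\phi(x),\phi(y))\theta(t)\big)
 = \big(\phi(x),\theta(t)\big)*\big(\phi(y),\theta(s)\big)\\
\notag &= F_{(\phi,\theta)}(x,t)*F_{(\phi,\theta)}(y,s),
\end{align}
where the third equality uses that $\phi\in\Aut(Q)$ together with the stabilizer condition $\theta\alpha(x,y)=\alpha(\phi(x),\phi(y))\theta$. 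So $F_{(\phi,\theta)}\in\Aut(Q\times_\alpha S)$.

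The second step is to check that $(\phi,\theta)\mapsto F_{(\phi,\theta)}$ is a group homomorphism: $F_{(\phi_1,\theta_1)}\circ F_{(\phi_2,\theta_2)}(x,t)=F_{(\phi_1,\theta_1)}(\phi_2(x),\theta_2(t))=(\phi_1\phi_2(x),\theta_1\theta_2(t))=F_{(\phi_1\phi_2,\theta_1\theta_2)}(x,t)$, which also confirms that the product $(\phi_1\phi_2,\theta_1\theta_2)$ again stabilizes $\alpha$, so the stabilizer is genuinely a subgroup and the map is defined on all of it. Finally, injectivity is immediate: if $F_{(\phi,\theta)}=\id$, then $(\phi(x),\theta(t))=(x,t)$ for all $x\in Q$, $t\in S$, forcing $\phi=\id_Q$ and $\theta=\id_S$ (here one uses that $S$ is nonempty; if $S=\varnothing$ the statement is vacuous). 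This yields the desired embedding.

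I do not anticipate a genuine obstacle here — the argument is a direct verification — but the one place demanding care is the third equality in the automorphism computation, namely correctly translating the stabilizer condition from Lemma~\ref{aut-constant-action} (which is phrased with $\phi^{-1}$ and conjugation by $\theta$) into the form $\theta\alpha(x,y)=\alpha(\phi(x),\phi(y))\theta$ actually used, and making sure the cocycle axioms are not secretly needed. They are not: once $\alpha$ is a cocycle, $Q\times_\alpha S$ is already a well-defined quandle by the discussion preceding the theorem, so we only need $F_{(\phi,\theta)}$ to respect $*$, which is exactly the displayed computation.
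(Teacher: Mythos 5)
Your proposal is correct and follows essentially the same route as the paper: both send $(\phi,\theta)$ to the map $(x,t)\mapsto(\phi(x),\theta(t))$, verify it respects the extension operation via the stabilizer identity $\theta\,\alpha(x,y)=\alpha(\phi(x),\phi(y))\,\theta$, and check the assignment is an injective homomorphism. Your remark about the $S=\varnothing$ edge case for injectivity is a small point of extra care not made explicit in the paper, but otherwise the arguments coincide.
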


\begin{proof}For an automorphism $\phi$ from ${\rm Aut}(Q)$  and a permutation $\theta$ from $\Sigma_{|S|}$ such that $(\phi,\theta)$ belongs to $({\rm Aut}(Q)\times\Sigma_{|S|})_{\alpha}$ define the  map $\gamma: Q \times_\alpha S \to Q \times_\alpha S$ by the rule $\gamma(x, t)= \left(\phi(x), \theta(t)\right)$. 
The map $\gamma$ is obviously a bijection of $Q\times_{\alpha}S$. Since $(\phi,\theta)\in({\rm Aut}(Q)\times\Sigma_{|S|})_{\alpha}$, we have  $^{(\phi, \theta)}\alpha=\alpha$ and by the definition of the map $^{(\phi,\theta)}$ for all $x, y \in Q$ we have the equality $\theta  \alpha \big(\phi^{-1}(x), \phi^{-1}(y) \big) \theta^{-1}=\alpha(x,y)$. For the elements $x, y \in Q$,  $t, s \in S$ we have
\begin{align}
\notag\gamma \left((x,t)*(y,s) \right)& = \gamma\left(\left(x*y, \alpha(x,y)(t)\right)\right)=  \left(\phi(x)*\phi(y), \theta\alpha(x,y)(t)\right)\\
\notag  &= \left(\phi(x)*\phi(y), \alpha(\phi(x), \phi(y))\theta(t) \right)=  \left(\phi(x),\theta(t)\right)*\left(\phi(y),\theta(s)\right)\\
\notag  &=  \gamma (x,t)*\gamma (y,s),
\end{align}
i.~e. $\gamma$ is an automorphism of $Q\times_{\alpha}S$ and we have an injective map
$$\Psi: \big({\rm Aut}(Q) \times \Sigma_{|S|}\big)_{\alpha} \to \Aut(Q \times_\alpha S)$$
which maps a pair $(\phi,\theta)$ to the automorphism $\gamma$.   If $(\phi_1, \theta_1), (\phi_2, \theta_2) \in \big({\rm Aut}(Q) \times \Sigma_{|S|}\big)_{\alpha}$, then
 $$\gamma_1 \gamma_2(x, t)= \gamma_1\big(\phi_2(x), \theta_2(t)\big)=\big(\phi_1\phi_2(x), \theta_1\theta_2(t)\big).$$
Thus $\Psi\big( (\phi_1, \theta_1) (\phi_2, \theta_2) \big)=\Psi\big( (\phi_1, \theta_1)\big) \Psi \big((\phi_2, \theta_2) \big)$, and hence $\Psi$ is a homomorphism.
\end{proof}
\begin{remark}
If $(\phi, \theta)$ is an element from $\Aut(Q) \times \Sigma_{|S|}$ and the map $\gamma: Q \times_\alpha S \to Q \times_\alpha S$ defined by $\gamma(x, t)= \big(\phi(x), \theta(t)\big)$ is an automorphism of $Q\times_{\alpha}S$, then reversing the preceeding calculations it is easy to see that $(\phi, \theta) \in \big({\rm Aut}(Q) \times \Sigma_{|S|}\big)_{\alpha}$, i.~e. the map $\gamma$ defined by $\gamma(x, t)= \big(\phi(x), \theta(t)\big)$ is an automorphism of $Q\times_{\alpha}S$ if an only if $^{(\phi,\theta)}\alpha=\alpha$.
\end{remark}
For an abelian group $A$ denote by $\psi: A \to \Sigma_{|A|}$ the map which maps the element $a\in A$ to the map $\psi_a(b)=b+a$ for all $b \in A$. Following \cite{Carter3} we call the function  $\mu : Q \times Q \to A$   {\it a quandle  $2$-cocycle} if the map $\psi\mu : Q \times Q \to \Sigma_{|A|}$ is a constant quandle cocycle.
Two quandle $2$-cocycles $\mu, \nu: Q \times Q \to A$ are said to be {\it cohomologous} if the corresponding constant quandle cocycles $\psi\mu,\psi\nu$ are homologous. The set of cohomology classes of quandle $2$-cocycles is the second cohomology group  $\Ha^2(Q,A)$ of the quandle $Q$ with coefficients from the group $A$.
For a given quandle $2$-cocycle $\mu$ denote by the symbol $E(Q, A, \mu)$ the quandle $Q\times_{\psi\mu} A$. By formula \eqref{non-abelian-structure} the operation in $E(Q, A, \mu)$ has the form
$$(x, a) * (y, b) = \left(x * y, a + \mu(x, y)\right)$$
for $x, y \in Q$ and $a, b \in A$. The quandle $E(Q, A, \mu)$ is called {\it an abelian extension of $Q$ by $A$}. Plenty of well-known quandles are abelian extensions of smaller quandles (see \cite{Carter2} for details). All results which are faithfull for non-abelian extensions of quandles are also correct for abelian extensions of quandles. In particular, Theorem \ref{exterrr} emplies the following result
\begin{corollary}
Let $Q$ be a quandle, $A$ be an abelian group and $\mu : Q \times Q \to A$ be a quandle $2$-cocycle. Then there is an embedding $ \big({\rm Aut}(Q) \times {\rm Aut}(A) \big)_{\mu} \hookrightarrow  \Aut\big(E(Q, A, \mu)\big)$.
\end{corollary}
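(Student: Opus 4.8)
The plan is to reduce the statement to Theorem \ref{exterrr} by taking the set $S$ to be the underlying set of $A$ and the constant quandle cocycle to be $\alpha=\psi\mu$, so that $E(Q,A,\mu)=Q\times_{\psi\mu}A$ by definition. Theorem \ref{exterrr} already supplies an embedding $(\Aut(Q)\times\Sigma_{|A|})_{\psi\mu}\hookrightarrow\Aut(E(Q,A,\mu))$, and it remains only to realise $\big(\Aut(Q)\times\Aut(A)\big)_{\mu}$ as a subgroup of the source of this map.

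First I would fix the action with respect to which the stabilizer $\big(\Aut(Q)\times\Aut(A)\big)_{\mu}$ is taken: for $\phi\in\Aut(Q)$ and $\eta\in\Aut(A)$ set $^{(\phi,\eta)}\mu(x,y)=\eta\big(\mu(\phi^{-1}(x),\phi^{-1}(y))\big)$, the evident analogue for $A$-valued cocycles of the action of Lemma \ref{aut-constant-action}; a direct check shows this is again a quandle $2$-cocycle and that this rule defines a left action of $\Aut(Q)\times\Aut(A)$ on the set of quandle $2$-cocycles. Then comes the key compatibility. Viewing $\Aut(A)$ inside $\Sigma_{|A|}$ via the obvious inclusion $\iota$ (a group automorphism is in particular a permutation of the underlying set), I would verify the conjugation identity $\eta\,\psi_a\,\eta^{-1}=\psi_{\eta(a)}$ for all $a\in A$ and $\eta\in\Aut(A)$, which is immediate from $\eta(\eta^{-1}(b)+a)=b+\eta(a)$. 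Applying this pointwise gives $^{(\phi,\iota(\eta))}(\psi\mu)=\psi\big(^{(\phi,\eta)}\mu\big)$ in $\mathcal{C}^2(Q,A)$. Since the left regular representation $\psi\colon A\to\Sigma_{|A|}$ is faithful, $\psi$ is injective, so $^{(\phi,\iota(\eta))}(\psi\mu)=\psi\mu$ if and only if $^{(\phi,\eta)}\mu=\mu$. Hence the inclusion $\id\times\iota\colon\Aut(Q)\times\Aut(A)\hookrightarrow\Aut(Q)\times\Sigma_{|A|}$ carries $\big(\Aut(Q)\times\Aut(A)\big)_{\mu}$ into $(\Aut(Q)\times\Sigma_{|A|})_{\psi\mu}$, and composing with the embedding of Theorem \ref{exterrr} produces the claimed embedding.

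There is no genuinely hard step here; the only thing to be careful about is the bookkeeping with stabilizers, namely making sure that the stabilizer of $\mu$ under the $\Aut(Q)\times\Aut(A)$-action corresponds exactly, under $\psi$, to the stabilizer of $\psi\mu$ under the $\Aut(Q)\times\Sigma_{|A|}$-action — this is precisely what the identity $\eta\psi_a\eta^{-1}=\psi_{\eta(a)}$ and the injectivity of $\psi$ guarantee. One could alternatively bypass Theorem \ref{exterrr} and argue as in the Remark following it: a pair $(\phi,\eta)\in\Aut(Q)\times\Aut(A)$ induces the bijection $(x,a)\mapsto(\phi(x),\eta(a))$ of $E(Q,A,\mu)$, and reversing the computation in the proof of Theorem \ref{exterrr} shows this bijection is an automorphism exactly when $\eta\big(\mu(\phi^{-1}(x),\phi^{-1}(y))\big)=\mu(x,y)$ for all $x,y\in Q$, i.e. exactly when $(\phi,\eta)\in\big(\Aut(Q)\times\Aut(A)\big)_{\mu}$, which makes the embedding completely explicit.
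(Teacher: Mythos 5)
Your proof is correct and takes essentially the same route as the paper, which derives the corollary directly from Theorem \ref{exterrr} by setting $S=A$ and $\alpha=\psi\mu$ without further comment. Your verification that $\eta\,\psi_a\,\eta^{-1}=\psi_{\eta(a)}$ together with the injectivity of $\psi$ identifies $\big(\Aut(Q)\times\Aut(A)\big)_{\mu}$ with a subgroup of $\big(\Aut(Q)\times\Sigma_{|A|}\big)_{\psi\mu}$ supplies exactly the stabilizer bookkeeping the paper leaves implicit.
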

\bigskip

\section{Quasi-inner automorphisms}\label{sec8}
Recall that an automorphism $\varphi$ of a group $G$ is called {\it quasi-inner} if for any element $g \in G$ there exists an element $h \in G$ such that $\varphi(g) = h^{-1} g h$. Since the conjugation by the element $h$ defines the inner automorphism $\widehat{h}$ of the group $G$, then $\varphi$ is quasi-inner automorphism if for every element $g \in G$ there exists an inner automorphism $\widehat{h}$ of $G$ such that $\varphi(g) = \widehat{h}(g)$. Every inner automorphism of $G$ is obviously quasi-inner.

In 1913, Burnside formulated the following question: Is it true that any quasi-inner automorphism of a group is inner? Burnside \cite{Burnside} answered this question negatively by constructing an example of a finite group which has a quasi-inner automorphism which is not inner. For quandles we can formulate two different definitions of a quasi-inner automorphism:
\begin{enumerate}
\item An automorphism $\varphi$ of a quandle $Q$ is called {\it a quasi-inner automorphism in a strong sense} if for every element $x \in Q$ there exists an element $y\in Q$ such that $\varphi(x)=x*y$.
\item An automorphism $\varphi$ of a quandle $Q$ is called {\it a quasi-inner automorphism in a weak sense} if for every element $x \in Q$ there exists an inner automorphism $S \in \Inn(Q)$ such that $\varphi(x)=S(x)$.
\end{enumerate}

Every quasi-inner automorphism in a strong sense is obviously quasi-inner in a weak sense. However, in the general case the opposite is not correct since  the group of inner automorphisms $\Inn(Q)=\langle S_x~|~x\in Q \rangle$ does not have to coincide with the set $\{ S_x~|~x \in Q \}$. If $Q$ is a conjugation quandle ${\rm Conj}(G)$ for some group $G$, then the two definitions of a quasi-inner automorphism are the same. If $G$ is a $2$-step nilpotent group, then the two definitions of a quasi-inner automorphism are equivalent also for quandles $\Conj_n(G)$ for any $n$.

Denote by $\QInn (Q)$ the set of all quasi-inner in a weak sense automorphisms of a quandle $Q$. The set $\QInn (Q)$ is obviously a subgroup of $\Aut(Q)$ which contains the group of inner automorphisms $\Inn(Q)$. If $Q$ is the trivial quandle, then  both groups $\Inn(Q)$, $\QInn(Q)$ are trivial. The following question is an analogue of Burnside's problem for quandles.
\begin{question}\label{Quasi}
Does there exist a quandle $Q$ with $\Inn(Q) \neq \QInn(Q)$? 
\end{question}
If $G$ is a group with non-inner quasi-inner automorphism $\varphi$ constructed by Burnside, then the quandle $Q={\rm Conj}(G)$ has non-inner quasi-inner in a strong sense automorphism induced by the automorphism $\varphi$, what gives a negative answer to Question \ref{Quasi}. We are going to construct a simpler example of the quandle which is not a conjugation quandle and which possesses a non-inner quasi-inner automorphism.
\begin{lemma}\label{qaut}If $Q$ is a connected quandle, then ${\rm Aut}(Q)=\QInn(Q)$.
\end{lemma}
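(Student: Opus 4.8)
The plan is to unwind the definitions and observe that connectedness is exactly the transitivity of $\Inn(Q)$ on $Q$, which makes the inclusion $\Aut(Q)\subseteq\QInn(Q)$ immediate. Since $\QInn(Q)$ is by definition a subgroup of $\Aut(Q)$, the reverse inclusion $\QInn(Q)\subseteq\Aut(Q)$ is automatic, so it suffices to prove $\Aut(Q)\subseteq\QInn(Q)$.

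First I would fix an arbitrary $\varphi\in\Aut(Q)$ and an arbitrary $x\in Q$. Because $Q$ is connected, the orbit $Orb(x)=\{S(x)\mid S\in\Inn(Q)\}$ equals all of $Q$; in particular $\varphi(x)\in Orb(x)$, so there exists an inner automorphism $S\in\Inn(Q)$ (depending on $x$) with $S(x)=\varphi(x)$. This is precisely the condition that $\varphi$ be quasi-inner in the weak sense, so $\varphi\in\QInn(Q)$. As $\varphi$ was arbitrary, $\Aut(Q)\subseteq\QInn(Q)$, and combining with $\QInn(Q)\le\Aut(Q)$ yields the equality.

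There is essentially no obstacle here: the entire content is the translation ``connected $\Leftrightarrow$ $\Inn(Q)$ acts transitively,'' which is built into the definition of $Orb(Q)$ recalled in Section \ref{sec2}. The one point worth stating explicitly in the write-up is that the inner automorphism $S$ realizing $\varphi$ is allowed to depend on the point $x$ — the definition of a quasi-inner automorphism in the weak sense permits exactly this, in contrast to the strong-sense notion where one additionally needs $S$ of the special form $S_y$. This already hints at why the analogous statement can fail for disconnected quandles, motivating the subsequent discussion.
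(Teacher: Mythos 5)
Your argument is correct and is essentially identical to the paper's own proof: both use that connectedness means the $\Inn(Q)$-orbit of $x$ is all of $Q$, so $\varphi(x)$ is realized as $S(x)$ for some inner $S$ depending on $x$, which is exactly the weak quasi-inner condition. Your write-up is a bit more explicit about the trivial reverse inclusion and the pointwise dependence of $S$ on $x$, but there is no substantive difference.
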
 
\begin{proof}Let $\phi$ be an automorphism of $Q$. Since $Q$ is connected, for an element $\phi(x) \in Q$ there exists an inner automorphism $S \in \Inn(Q)$ such that $\phi(x)=S(x)$. Therefore ${\rm Aut}(Q)=\QInn(Q)$.
\end{proof}
\begin{proposition}\label{dihburn}
If $n\geq5$ is odd, then $\Inn(\R_n) \neq \QInn(\R_n)$.
\end{proposition}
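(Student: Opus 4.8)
The plan is to compare the size of $\QInn(\R_n)$ with the size of $\Inn(\R_n)$ and show they differ. By Theorem \ref{thm-dbs}(2), for odd $n$ (so $\mathbb{Z}_n$ has no $2$-torsion) we have $\Inn(\R_n)=\Inn(T(\mathbb{Z}_n))=2\mathbb{Z}_n\rtimes\mathbb{Z}_2=\mathbb{Z}_n\rtimes\mathbb{Z}_2$, the dihedral group of order $2n$; and by Theorem \ref{thm-dbs}(1), $\Aut(\R_n)=\mathbb{Z}_n\rtimes\Aut(\mathbb{Z}_n)$, which has order $n\varphi(n)$. Since $\R_n$ is connected for odd $n$, Lemma \ref{qaut} gives $\QInn(\R_n)=\Aut(\R_n)$. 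So it suffices to check that $n\varphi(n)\neq 2n$, i.e. $\varphi(n)\neq 2$, which holds for every odd $n\geq 5$ (the only $n$ with $\varphi(n)=2$ are $n=3,4,6$, and among odd numbers only $n=3$). Hence $\Inn(\R_n)\subsetneq\QInn(\R_n)$.

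First I would record that $n$ odd implies $\mathbb{Z}_n$ has no $2$-torsion, so Theorem \ref{thm-dbs} applies directly, giving both $\Aut(\R_n)=\mathbb{Z}_n\rtimes\Aut(\mathbb{Z}_n)$ of order $n\,\varphi(n)$ and $\Inn(\R_n)=2\mathbb{Z}_n\rtimes\mathbb{Z}_2$; since $n$ is odd, $2\mathbb{Z}_n=\mathbb{Z}_n$, so $|\Inn(\R_n)|=2n$. Next I would invoke the fact, already noted in the excerpt, that $\R_n$ is connected precisely when $n$ is odd, and apply Lemma \ref{qaut} to conclude $\QInn(\R_n)=\Aut(\R_n)$. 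Finally I would finish the arithmetic: for odd $n\geq 5$ one has $\varphi(n)\geq\varphi(5)=4>2$ in the cyclic-prime case, and more generally $\varphi(n)\neq 2$ for all odd $n\neq 3$, so $|\QInn(\R_n)|=n\,\varphi(n)>2n=|\Inn(\R_n)|$, whence the two groups are distinct.

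There is really no serious obstacle here: the work has been done in Theorem \ref{thm-dbs} and Lemma \ref{qaut}. The only mild care needed is the elementary number-theoretic claim that $\varphi(n)>2$ for odd $n\geq 5$, which follows because any odd $n\geq 5$ is divisible by an odd prime $p\geq 5$ (giving $\varphi(n)\geq p-1\geq 4$) or is a power of $3$ that is at least $9$ (giving $\varphi(n)\geq 6$). In fact the argument shows the stronger statement that $\QInn(\R_n)=\Aut(\R_n)\supsetneq\Inn(\R_n)$ for every odd $n\geq5$, exhibiting the dihedral quandle $\R_n$ as the promised simple example of a quandle that is not a conjugation quandle and possesses a non-inner quasi-inner (indeed quasi-inner in the strong sense, since each $S_x$ is of the form ``act by some element'') automorphism, giving a negative answer to Question \ref{Quasi}.
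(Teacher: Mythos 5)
Your proof is correct and follows essentially the same route as the paper: identify $\Aut(\R_n)=\mathbb{Z}_n\rtimes\Aut(\mathbb{Z}_n)$ and $\Inn(\R_n)=\mathbb{Z}_n\rtimes\mathbb{Z}_2$ via Theorem \ref{thm-dbs}, use connectedness of $\R_n$ for odd $n$ together with Lemma \ref{qaut} to get $\QInn(\R_n)=\Aut(\R_n)$, and conclude by comparing the two groups. The only difference is that you make explicit the order count $\varphi(n)\neq 2$ for odd $n\geq 5$, which the paper leaves implicit.
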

\begin{proof}
By Theorem \ref{thm-dbs} for odd $n$ the group $\Aut( \R_n)$ is isomorphic to the group $\mathbb{Z}_n\rtimes \mathbb{Z}_n^*$, where $\mathbb{Z}_n^*$ is the multiplicative group of the ring $\mathbb{Z}_n$. Also by Theorem \ref{thm-dbs} for odd $n$ the group  $\Inn(\R_n)$ is isomorphic to the group  $\mathbb{Z}_n \rtimes \mathbb{Z}_2$. Therefore for odd $n\ge 5$ the groups $\Inn(\R_n)$ and $\Aut( \R_n)$ do not coincide. Since for odd $n$ the dihedral quandle $\R_n$ is connected, by Lemma \ref{qaut} we have $\QInn(\R_n)={\rm Aut}(\R_n)\neq \Inn(Q)$, i.~e.  there exists a quasi-inner automorphism of $\R_n$ which is not inner.
\end{proof}
\begin{remark}By Proposition \ref{r4-auto} the group $\Aut(\R_4)$ is isomorphic to  $\Inn (\R_4) \rtimes \langle \phi \rangle$, where $\phi$ is a non-inner automorphism of $\R_4$. Direct calculations show that $\phi$ is not a quasi-inner automorphism, therefore $\Inn (\R_4)=\QInn(\R_4)$. It would be interesting to explore whether a similar result holds for $\R_{2n}$ for $n>2$.
\end{remark}\bigskip
\section{Constructions of new quandles using automorphisms}\label{sec9}
In this section we describe some general approaches of constructing quandles using automorphism groups. If $G$ is a group, then a map $\phi:G \to \Aut(G)$ is said to be \textit{compatible} if for every element $x \in G$ the following diagram commutes
$$
\xymatrix{
G \ar[d]_{\phi(x)} \ar[r]^{\phi \hspace{3mm}} & \Aut(G) \ar[d]^{\iota_{\phi(x)}}\\
G \ar[r]^{\phi \hspace{3mm}} & \Aut(G),}
$$
where $\iota_{\phi(x)}$ denotes the inner automorphism of $\Aut(G)$ of the form $\iota_{\phi(x)}:\varphi\mapsto \phi(x)\varphi\phi(x)^{-1}$ for all $\varphi$ from ${\rm Aut}(G)$. For example, the map $\phi:x\mapsto \id_G$ and the map $\phi:x\mapsto\hat{x}^{-1}$ which maps every element $x$ to the inner automorphism $\hat{x}^{-1}: y\mapsto xyx^{-1}$ induced by $x^{-1}$ are both compatible. 

\begin{proposition}\label{new1}
Let $G$ be a group and $\phi: G \to \Aut(G)$ be a compatible map such that $\phi(x)(x)=x$ for all $x \in G$. Then the set $G$ with the operation  $x*y=\phi(y)(x)$ is a quandle. Moreover, if $\phi$ is an injective map which satisfies $\phi(xy)=\phi(y)\phi(x)$ for all $x,y\in G$, then  $x*y=y x  y^{-1}$.
\end{proposition}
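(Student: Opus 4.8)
The plan is to verify the three quandle axioms for the operation $x*y=\phi(y)(x)$ directly, using the hypothesis that $\phi$ is compatible and that $\phi(y)(y)=y$ for all $y$. The first axiom $x*x=x$ is immediate from the assumption $\phi(x)(x)=x$. The second axiom requires that $S_y:x\mapsto x*y=\phi(y)(x)$ be a bijection of $G$; but this is clear since $\phi(y)$ is an automorphism of $G$, hence in particular a bijection, with inverse $\phi(y)^{-1}$. So the only real content is the third (self-distributivity) axiom.

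For self-distributivity I would compute both sides of $(x*y)*z=(x*z)*(y*z)$ and use compatibility of $\phi$ to match them. The left-hand side is $\phi(z)\big(\phi(y)(x)\big)=\big(\phi(z)\phi(y)\big)(x)$. The right-hand side is $\phi(y*z)\big(\phi(z)(x)\big)=\phi\big(\phi(z)(y)\big)\big(\phi(z)(x)\big)$. Now the commuting square defining compatibility, read at the element $z$, says precisely that $\phi\big(\phi(z)(y)\big)=\iota_{\phi(z)}\big(\phi(y)\big)=\phi(z)\,\phi(y)\,\phi(z)^{-1}$ for every $y$. Substituting this into the right-hand side gives $\phi(z)\phi(y)\phi(z)^{-1}\big(\phi(z)(x)\big)=\phi(z)\phi(y)(x)=\big(\phi(z)\phi(y)\big)(x)$, which equals the left-hand side. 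Hence $G$ with $*$ is a quandle. I expect this matching step — correctly reading the compatibility diagram at the right argument and plugging it in — to be the main (though still routine) point; it is easy to apply the diagram at the wrong element or in the wrong direction.

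For the second assertion, suppose in addition that $\phi$ is injective and satisfies $\phi(xy)=\phi(y)\phi(x)$ for all $x,y\in G$. Setting $x=y$ in this identity and using $x*x=x$, or rather working out small relations, one first pins down $\phi$ on enough elements. The standard route: from $\phi(xy)=\phi(y)\phi(x)$ with $y=1$ we get $\phi(x)=\phi(1)\phi(x)$, so $\phi(1)=\id_G$ (as $\phi(x)$ is invertible); then $\phi(xx^{-1})=\phi(1)=\id_G$ gives $\phi(x^{-1})\phi(x)=\id_G$, i.e. $\phi(x^{-1})=\phi(x)^{-1}$. The hypothesis $\phi(x)(x)=x$ says $x$ is a fixed point of $\phi(x)$. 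The claim is that $\phi(x)$ must be the inner automorphism $\hat x^{-1}:y\mapsto xyx^{-1}$ (equivalently $\phi(y)(x)=yxy^{-1}$). I would argue: consider the map $\psi:x\mapsto \widehat{x}^{-1}$, which is a compatible map also satisfying $\psi(x)(x)=x$ and the anti-homomorphism relation $\psi(xy)=\psi(y)\psi(x)$; then show that the injectivity forces $\phi=\psi$. Concretely, apply $\phi$ to the relation and use compatibility together with $\phi(\phi(z)(y))=\phi(z)\phi(y)\phi(z)^{-1}$ and $\phi(yz)=\phi(z)\phi(y)$ to derive $\phi(z)\phi(y)\phi(z)^{-1}=\phi(zyz^{-1})\cdots$, extracting that $\phi(z)$ conjugates in the same way that $\widehat{z}^{-1}$ does on the image of $\phi$; since $\phi$ is injective this determines $\phi(z)$ as conjugation by $z^{-1}$ on all of $G$. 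The main obstacle in this part is bookkeeping with the anti-homomorphism property (so that composition orders do not get reversed incorrectly) and using injectivity at exactly the right moment to upgrade an equality of images to an equality of automorphisms; once $\phi(y)=\widehat{y}^{-1}$ is established, $x*y=\phi(y)(x)=yxy^{-1}$ is immediate.
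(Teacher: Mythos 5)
Your proof is correct and follows essentially the same route as the paper: the three axioms are checked exactly as there, with the only real content being the compatibility square read at $z$, which gives $\phi\big(\phi(z)(y)\big)=\phi(z)\phi(y)\phi(z)^{-1}$ and makes the two sides of self-distributivity match; and the second assertion is obtained, as in the paper, by combining that identity with the anti-homomorphism property and injectivity of $\phi$. One caution on the part you left as ``$\phi(zyz^{-1})\cdots$'': with $\phi(ab)=\phi(b)\phi(a)$ and the usual right-to-left composition in $\Aut(G)$, one gets $\phi(z)\phi(y)\phi(z)^{-1}=\phi(z^{-1}yz)$ rather than $\phi(zyz^{-1})$, so injectivity yields $\phi(z)(y)=z^{-1}yz$; the paper's own two-line argument (which asserts $\phi(\phi(x)(y))=\phi(xyx^{-1})$) tacitly uses the opposite composition convention, under which its model example $x\mapsto\hat{x}^{-1}$ is an anti-homomorphism, and the final formula comes out as $yxy^{-1}$ versus $y^{-1}xy$ depending only on that choice -- so your strategy is the right one, but you should fix the convention and carry the computation through without the ellipsis.
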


\begin{proof} We need to check three axioms of a quandle. 
The equality $x*x=\phi(x)(x)=x$ for all $x \in G$ is given as the condition of the proposition, therefore the first quandle axiom is faithful. For an element $x$ the map $S_x:y\mapsto y*x=\phi(x)(y)$ is an automorphism of $G$ and therefore is a bijection, i.~e. the second quandle axiom is faithful. 
Since $\phi$ is a compatible map, for elements $y,z\in G$ we have $\phi(z)\phi(y)=\phi\left(\phi(z)(y)\right)\phi(z)$ and therefore
\begin{multline*}
(x*y)*z=\phi(z)(x*y)=\phi(z)\phi(y)(x)=\\=\phi(\phi(z)(y))\phi(z)(x)
=\phi(z)(x)*\phi(z)(y)=(x*z)*(y*z),
\end{multline*}
i.~e. the third quandle axiom is faithful and the set $G$ with the operation $x*y=\varphi(y)(x)$ is a quandle. 

If $\phi$ is a compatible map which satisfies the equality $\phi(xy)=\phi(y)\phi(x)$ for all $x,y\in G$, then  $\phi(x)\phi(y)=\phi\left({\phi(x)(y)}\right)\phi(x)$, therefore $\phi\left({\phi(x)(y)}\right)=\phi(xyx^{-1})$ and if $\phi$ is injective, then $y*x={\phi(x)(y)}=xyx^{-1}$.
\end{proof}
For the compatible map $\phi:G\to {\rm Aut}(G)$ of the form $x\mapsto \id_G$ the quandle constructed in Proposition \ref{new1} is trivial. While for the compatible map $\phi:G\to {\rm Aut}(G)$ which maps an element $x$ to the inner automorphism $\hat{x}^{-1}$ the quandle constructed in Proposition \ref{new1} is a quandle ${\rm Conj}_{-1}(G)$.
The following proposition gives another interesting construction of a quandle.
\begin{proposition}\label{new2}
Let $(Q_1, *)$ and $(Q_2,  \circ)$ be two quandles and let $\sigma: Q_1 \to  {\rm Conj}_{-1} \left(\Aut(Q_2) \right)$ and $\tau: Q_2 \to  {\rm Conj}_{-1} \left(\Aut(Q_1) \right)$ be two homomorphisms. Then the set $Q=Q_1 \sqcup Q_2$ with the operation
$$
x\star y=\begin{cases}
x*y,& x, y \in Q_1, \\
x\circ y,  &x, y \in Q_2, \\
{\tau(y)}(x),  &x \in Q_1, y \in Q_2, \\
{\sigma(y)}(x) &x \in Q_2, y \in Q_1.
\end{cases} 
$$
is a quandle if and only if the following conditions hold:
\begin{enumerate}
\item $\tau(z)(x)* y=\tau\left(\sigma(y)(z)\right)(x* y)$ for $x, y \in Q_1$ and $z \in Q_2$,
\item $\sigma(z)(x)\circ y=\sigma\left(\tau(y)(z)\right)(x\circ y)$ for $x, y \in Q_2$ and $z \in Q_1$.
\end{enumerate}
\end{proposition}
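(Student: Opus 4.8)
The plan is to verify the three quandle axioms for $\star$ directly, reducing each to the known structure on $Q_1$ and $Q_2$ together with the homomorphism properties of $\sigma$ and $\tau$; it will turn out that the first two axioms impose no condition, so the entire content of the statement sits inside the self-distributivity axiom, whose case analysis produces exactly $(1)$ and $(2)$.

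First I would dispose of idempotency: if $x\in Q_1$ then $x\star x=x*x=x$, and if $x\in Q_2$ then $x\star x=x\circ x=x$. For the invertibility of right translations, fix $y$ and note that $S_y\colon x\mapsto x\star y$ preserves the decomposition $Q=Q_1\sqcup Q_2$. If $y\in Q_1$, then $S_y$ restricts on $Q_1$ to the right translation by $y$ in $(Q_1,*)$, a bijection by the second quandle axiom in $Q_1$, and on $Q_2$ to the permutation $\sigma(y)\in\Aut(Q_2)$; if $y\in Q_2$ the roles of $Q_1,Q_2$ are swapped, with $\tau(y)\in\Aut(Q_1)$. Hence $S_y$ is a bijection of $Q$ with no extra hypotheses, and the first two axioms of a quandle hold unconditionally.

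The heart is therefore $(x\star y)\star z=(x\star z)\star(y\star z)$, which I would check by running through the $2^3=8$ cases according to whether each of $x,y,z$ lies in $Q_1$ or $Q_2$. The two pure cases (all three in $Q_1$, or all three in $Q_2$) are precisely self-distributivity in $Q_1$, respectively $Q_2$. If $x,y\in Q_1$ and $z\in Q_2$ — and in its mirror case — both sides equal $\tau(z)(x*y)$, using that $\tau(z)$ is an automorphism of $Q_1$. If $x\in Q_2$ and $y,z\in Q_1$, the left side is $\sigma(z)\sigma(y)(x)$ while the right side is $\sigma(y*z)\sigma(z)(x)$, so the identity reads $\sigma(y*z)\sigma(z)=\sigma(z)\sigma(y)$ in $\Aut(Q_2)$; but $\sigma$ is a homomorphism into $\operatorname{Conj}_{-1}(\Aut(Q_2))$, so $\sigma(y*z)=\sigma(z)\sigma(y)\sigma(z)^{-1}$ and the identity is automatic. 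The mirror case $x\in Q_1$, $y,z\in Q_2$ is automatic for the same reason via $\tau$. It is exactly the use of $\operatorname{Conj}_{-1}$ rather than $\operatorname{Conj}$ that makes these composition orders match, so this is the one place where the precise hypothesis on $\sigma,\tau$ is essential; this settles six of the eight cases without any further hypothesis.

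There remain the two mixed cases, and these are where $(1)$ and $(2)$ appear. When $x,z\in Q_1$ and $y\in Q_2$, the left side is $\tau(y)(x)*z$ and the right side is $(x*z)\star\sigma(z)(y)=\tau\big(\sigma(z)(y)\big)(x*z)$, so the axiom in this case is literally $(1)$ after renaming the two $Q_1$-entries and the $Q_2$-entry to match the statement; symmetrically, the case $x,z\in Q_2$, $y\in Q_1$ is $(2)$. Hence $(x\star y)\star z=(x\star z)\star(y\star z)$ holds for all $x,y,z$ if and only if $(1)$ and $(2)$ hold, which together with the earlier observations proves the equivalence. I expect no conceptual obstacle here; the only care required is bookkeeping the order of composition of automorphisms and matching the variable names in the two mixed cases with those in $(1)$ and $(2)$.
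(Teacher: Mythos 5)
Your proof is correct and follows essentially the same route as the paper: verify idempotency and bijectivity of right translations unconditionally, then reduce self-distributivity to a case analysis in which the pure cases and the cases resolved by $\sigma(z)\in\Aut(Q_2)$, $\tau(z)\in\Aut(Q_1)$ and the $\operatorname{Conj}_{-1}$-homomorphism property are automatic, leaving exactly conditions (1) and (2). The only difference is organizational (you enumerate the eight placements of $x,y,z$ rather than permuting positions of a fixed mixed triple as the paper does), and you are slightly more explicit about the ``only if'' direction, which the paper leaves implicit.
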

\begin{proof}We need to check three quandle axioms. For  $x\in Q$ the element $x\star x$ is equal to either $x*x$ or $x\circ x$ and is equal to $x$, therefore the first quandle axiom is faithful. For an element $x\in Q_1$ the map $S_x:y\mapsto y\star x$ acts as the map $y\mapsto y*x$ on $Q_1$ and as the map $\sigma(x):y\mapsto \sigma(x)(y)$ on $Q_2$. Therefore for $x\in Q_1$ the map $S_x$ is a bijection on $Q$. Analogically, for $x\in Q_2$ the map $S_x$ is a bijection on $Q$ and the second quandle axiom is faithful. Let $x,y,z$ be elements from $Q$. If all elements $x,y,z$ belong to $Q_i$, then the third quandle axiom obviously works for elements $x,y,z$ since it works in $Q_i$. So, let $x, y \in Q_1$ and $z \in Q_2$. In this case using equality (1) we have the equalities
\begin{align}
\notag(x\star y)\star z&=\tau(z)(x* y)=\tau(z)(x)* \tau(z)(y)=(x\star z)\star (y\star z),\\
\notag(z\star x)\star y &=  \sigma(x)(z)\star y=  \sigma(y) \sigma(x)(z)= \sigma(y) \sigma(x)\sigma(y)^{-1}(\sigma(y)(z))\\
\notag&=  \sigma(x*y) \big(\sigma(y)(z)\big)= \sigma(y)(z)\star(x\star y)= (z\star y)\star(x\star y),\\
\notag(x\star z)\star y&=\tau(z)(x)* y=\tau\big(\sigma(y)(z)\big)(x* y)=(x* y)\star \sigma(y)(z)=(x\star y)\star(z\star y),
\end{align}
i.~e. the third quandle axiom is faithfull for $x, y \in Q_1$ and $z \in Q_2$. Similarly, using equality (2) we obtain the third quandle axiom for $x, y \in Q_2$ and $z \in Q_1$. 
\end{proof}
For the trivial maps $\sigma:Q_1\to\{\id_{Q_2}\}$ and $\tau:Q_2\to\{\id_{Q_1}\}$ the required conditions (1) and (2) of Proposition \ref{new2} are faithful. So, for quandles $Q_1$ and $Q_2$ we can always define a quandle structure on $Q_1\sqcup Q_2$. 

As an another example, let $Q$ be \textit{an involutary quandle}, i.~e. a quandle where the equality $(x*y)*y=x$ holds for all $x,y\in Q$, and let both quandles $Q_1$, $Q_2$ be isomorphic to $Q$ with isomorphisms $\varphi:Q\to Q_1$ and $\psi:Q\to Q_2$. Using direct calculations it is easy to check that the maps $\sigma:\varphi(x)\mapsto S_{\psi(x)}$ and $\tau:\psi(x)\mapsto S_{\varphi(x)}$ satisfy conditions (1) and (2) of Proposition \ref{new2} and therefore we can define a quandle structure on $Q\sqcup Q$ for every involutary quandle $Q$ (in particular, for every core quandle $\Core(G)$ of a group $G$).

The quandle $Q=\langle x,y,z~|~x*y=z, x*z=x, z*x=z, z*y=x, y*x=y, y*z=y\rangle$ which we mentioned in Section \ref{sec3} as a quandle for which the map $Q\to G_Q$ is not injective, is obtained from two trivial quandles $Q_1=\{x,z\}$, $Q_2=\{y\}$ using the procedure from Proposition \ref{new2}, where the homomorphisms $\sigma$ maps $x$ and $z$ to $\id_{Q_2}$, and the homomorphism $\tau$ maps $y$ to the automorphism of $Q_1$ which permute $x$ and $z$.
\bigskip

\end{document}